\patchcmd\Gread@eps{\@inputcheck#1 }{\@inputcheck"#1"\relax}{}{}
\theoremstyle{plain}
\newtheorem{lemma}{Lemma}[section]
\newtheorem*{theorem*}{Theorem}
\newtheorem*{lemma*}{Lemma}
\newtheorem*{proposition*}{Proposition}
\newtheorem*{conjecture*}{Conjecture}
\newtheorem*{corollary*}{Corollary}
\newtheorem*{problem*}{Problem}
\newtheorem{theorem}[lemma]{Theorem}
\newtheorem{corollary}[lemma]{Corollary}
\newtheorem{proposition}[lemma]{Proposition}
\theoremstyle{definition}
\newtheorem{definition}[lemma]{Definition}
\newtheorem{remark}[lemma]{Remark}
\newcommand{\CC}{\mathbb{C}}
\newcommand{\RR}{\mathbb{R}}
\newcommand{\ZZ}{\mathbb{Z}}
\newcommand{\PP}{\mathbb{P}}
\DeclareMathOperator{\Gr}{Gr}
\DeclareMathOperator{\sHom}{\mathcal{H}\kern -.5pt\mathit{om}}
\DeclareMathOperator{\sTor}{\mathcal{T}\kern -1.5pt\mathit{or}}
\begin{document}

\date{\today}
\author[Neelarnab Raha]{Neelarnab Raha}
\address{Department of Mathematics, The Pennsylvania State University, University Park, PA 16802}
\email{neelraha@psu.edu}

\keywords{Ample cones, Nef cones, Hilbert schemes, Hypersurfaces in projective 3-space}

\title{Ample cones of Hilbert schemes of points on hypersurfaces in $\PP^3$}

\begin{abstract}
Let $X$ be a very general degree $d\geq 5$ hypersurface in $\PP^3$. We compute the ample cone of the Hilbert scheme $X^{[n]}$ of $n$ points on $X$ for various small values of $n$ (the answer is already known for large $n$). We obtain complete answers in some cases and find lower bounds in certain others. We also observe that in the case of $X^{[2]}$ for quintic hypersurfaces $X$, the existence (or absence) of hyperplane sections with points of high multiplicity also plays a role in the answer to the question at hand, in contrast with cases known earlier. Finally, in the case that a degree $d\geq3$ smooth hypersurface $X$ contains a line, we compute the nef cone of $X^{[n]}$ in a slice of the N\'{e}ron-Severi space.
\end{abstract}
\dedicatory{To my Mom, who will be in my heart forever}
\maketitle

\setcounter{tocdepth}{1}
\tableofcontents

\section{Introduction}\label{sec:Intro}

Given any projective variety $Y$, its ample cone inside the N\'{e}ron-Severi space carries information about the embeddings of $Y$ into projective spaces. Its closure is the nef cone of $Y$, denoted $\mbox{Nef}(Y)$. The ample cone is the interior of the nef cone. Hence, calculating the nef cone and calculating the ample cone are equivalent problems. It is well-known (see \cite{Lazarsfeld}) that $\mbox{Nef}(Y)$ is dual to the Kleiman-Mori cone of curves, and it is an interesting question to compute the ample and the nef cones of varieties.

In this paper, we work over the field of complex numbers. A degree $d$ hypersurface $X$ in $\PP^3$ is said to be \emph{$NL$-general} if $X$ is smooth and has Picard rank $1$, with the hyperplane class generating $\mbox{Pic}(X)$. We compute the nef cones of Hilbert schemes $X^{[n]}$ of $n\geq2$ points on $X$ for $NL$-general hypersurfaces $X$ of fixed degree $d\geq5$, for certain small values of $n$. We obtain exact answers in some cases, and find lower bounds in some other cases.

If $X$ is a degree $d\geq3$ smooth hypersurface in $\PP^3$ containing a line (and is therefore not $NL$-general), we calculate the nef cone of $X^{[n]}$ in a two-dimensional slice of the N\'{e}ron-Severi space, for any $n\geq2$.

\subsection{What is known}

Given an $NL$-general degree $d$ hypersurface $X$ in $\PP^3$, the problem of finding the nef cone of $X^{[n]}$ has been studied by several authors (see \cite{Bayer-Macri-K3}, \cite{Bolognese et al.}, \cite{CH-Israel}) by invoking Bridgeland stability conditions. We first briefly describe their findings below.

Let $H^{[n]}$ be the numerical equivalence class of the divisor on $X^{[n]}$ induced by the hyperplane section divisor $H$ on $X$, and let $B^{[n]}$ be the numerical equivalence class of the divisor of non-reduced schemes in $X^{[n]}$. The N\'{e}ron-Severi space $N^1\!\left(X^{[n]}\right)$ is spanned by $H^{[n]}$ and $B^{[n]}$ (see \cite[Lemma 3.1]{Huizenga12}). The fact that $H^{[n]}$ is an extremal nef divisor is fairly easy to see, since it contracts curves on $X^{[n]}$.

In \cite{Bayer-Macri-K3}, the authors study moduli spaces of stable sheaves on $K3$ surfaces using Bridgeland stability conditions. As a corollary (\cite[Lemma 13.3\footnote{\href{https://arxiv.org/abs/1301.6968}{arXiv:1301.6968v4 [math.AG]} has a more updated version of this result.}]{Bayer-Macri-K3}) of their results, it follows that for an $NL$-general degree $4$ surface $X$ in $\PP^3$, the nef cone of the Hilbert scheme $X^{[2]}$ is spanned by $H^{[2]}$ together with $$\frac{3}{4}H^{[2]}-\frac{1}{2}B^{[2]}\,.$$

In \cite[Proposition 4.5]{Bolognese et al.}, the authors show that the nef cone of $X^{[n]}$ is spanned by $H^{[n]}$ and \begin{equation}\label{eq:BHL+Formula}
	\left(\frac{d}{2}-\frac{3}{2}+\frac{n}{d}\right)H^{[n]}-\frac{1}{2}B^{[n]}\,,
\end{equation} whenever $n\geq d-1$, where $X$ is an $NL$-general degree $d\geq4$ hypersurface in $\PP^3$. The hard part is to show nefness of the latter divisor, and the crucial ingredient for this is the Positivity Lemma of Bayer and Macr\`{i} (\cite[Lemma 3.3]{Bayer-Macri-PositivityLemma})\footnote{This is also used in \cite{Bayer-Macri-K3} for the case of $d=4,\,n=2$.}. Its extremality is shown by exhibiting a curve on $X^{[n]}$ orthogonal to the divisor class.

We should mention that the above results completely answer the question of the computation of nef cones of the Hilbert schemes $X^{[n]}$ of $n\geq2$ points on $NL$-general quartic hypersurfaces $X$ in $\PP^3$.

\subsection{What we do in this paper}

In this paper, we tackle cases in which the Bridgeland stability method mentioned above does not seem to apply. We resort to more classical techniques, and find lower bounds on the edge of the nef cone of $X^{[n]}$ other than the edge spanned by $H^{[n]}$ by constructing curves $\Gamma$ on $X^{[n]}$ with high values of the ratio $\left(\Gamma\!\cdot\!B^{[n]}\right)\!/\!\left(\Gamma\!\cdot\!H^{[n]}\right)$. In certain cases, we argue the nefness of the numerical class orthogonal to the constructed curve, primarily by pulling back  nef divisors via maps. In such cases, we then have complete answers to the question of computation of the nef cone of the Hilbert scheme.

We summarize our main results below.

\begin{theorem}
	Let $X$ be an $NL$-general degree $d$ hypersurface in $\PP^3$ for some integer $d\geq5$. Then the nef cone of $X^{[d-2]}$ is spanned by $H^{[d-2]}$ and $$\alpha H^{[d-2]}-\frac{1}{2}B^{[d-2]}$$ for some real number $$\alpha\geq\frac{d^2-d-6}{2d}\,.$$
\end{theorem}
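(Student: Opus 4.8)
The plan is to exhibit a single curve $\Gamma \subset X^{[d-2]}$ whose ratio $\left(\Gamma\!\cdot\!B^{[d-2]}\right)\!/\!\left(\Gamma\!\cdot\!H^{[d-2]}\right)$ equals $(d^2-d-6)/d$. Since $N^1\!\left(X^{[d-2]}\right)$ is the two-dimensional space spanned by $H^{[d-2]}$ and $B^{[d-2]}$, and since $H^{[d-2]}$ is already known to span one edge of the nef cone, the other edge may be normalized as $\alpha H^{[d-2]}-\frac12 B^{[d-2]}$ (its $B$-coefficient is negative, as the nef cone lies on the side where one subtracts the effective boundary class). Nefness then forces $\left(\alpha H^{[d-2]}-\frac12 B^{[d-2]}\right)\!\cdot\!\Gamma\ge 0$, that is, $\alpha\ge \frac12\cdot\frac{\Gamma\cdot B^{[d-2]}}{\Gamma\cdot H^{[d-2]}}$, so producing such a $\Gamma$ yields exactly the asserted bound. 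I do not attempt to prove nefness of the orthogonal class here, which is why only a lower bound is obtained.

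To build $\Gamma$, I would take a hyperplane $\Pi\subset\PP^3$ tangent to $X$ at a single point $p$, so that the plane curve $C=\Pi\cap X$ has degree $d$ and a single ordinary node at $p$; such $\Pi$ exist because the dual variety of a smooth surface is a hypersurface whose general point is a one-nodal tangent hyperplane. Let $\nu\colon\tilde C\to C$ be the normalization, of geometric genus $g=\binom{d-1}{2}-1=\frac{d(d-3)}{2}$. Projection of $C$ from its node $p$ --- equivalently, the pencil of lines in $\Pi$ through $p$, each meeting $C$ residually to $2p$ in $d-2$ further points --- defines a base-point-free $g^1_{d-2}$ on $\tilde C$, hence a nonconstant morphism $f\colon\tilde C\to\PP^1$ of degree $d-2$. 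The induced map $\PP^1\to X^{[d-2]}$, $t\mapsto f^{-1}(t)$, viewed as a length-$(d-2)$ subscheme of the smooth surface $X$, has image the desired rational curve $\Gamma$.

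The two intersection numbers are then read off by the standard incidence counts for curves in $X^{[d-2]}$ induced by a pencil on $C$. For a general hyperplane $\Pi'$, the $d$ points of $\Pi'\cap C$ each lie on a unique member of the pencil, giving $\Gamma\cdot H^{[d-2]}=H\cdot C=d$. The class $B^{[d-2]}$ meets $\Gamma$ in the locus where $f^{-1}(t)$ is non-reduced, which is precisely the branch locus of $f$; by Riemann--Hurwitz, $\Gamma\cdot B^{[d-2]}=2g-2+2(d-2)=d^2-d-6$. Dividing gives the ratio $(d^2-d-6)/d$ and hence $\alpha\ge\frac{d^2-d-6}{2d}$. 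As a consistency check, for $d=5$ and $n=3$ this yields $\alpha\ge 7/5$.

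The main obstacle is the clean evaluation of $\Gamma\cdot B^{[d-2]}$: one must verify that every contribution to the intersection with the non-reduced locus is a simple, transverse one counted by the ramification of $f$, with no stray contribution hidden at the node. Concretely this requires checking that the pencil is genuinely base-point-free of degree $d-2$, that the morphism to $X^{[d-2]}$ is defined at every $t$ (including those $t$ for which a residual point runs into the smooth point $p\in X$, where the subscheme remains a valid length-$(d-2)$ scheme), and that for general $X$ and general tangent $\Pi$ the branching of $f$ --- including the behavior over the two points of $\tilde C$ lying above $p$ --- is simple, so that Riemann--Hurwitz computes $\Gamma\cdot B^{[d-2]}$ on the nose.
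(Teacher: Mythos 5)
Your proposal is correct and follows essentially the same route as the paper: the paper's dual curve is exactly the one you construct (it is realized there as the image under the residuation isomorphism of the curve $\Gamma_{[2]}\subset X^{[2]}$ obtained by spinning a tangent vector at a general point, which geometrically is the pencil of residual length-$(d-2)$ schemes cut by lines through the node of the tangent-plane section), and the intersection numbers $\Gamma\cdot H^{[d-2]}=d$ and $\Gamma\cdot B^{[d-2]}=d^2-d-6$ are computed there by the identical genus and Riemann--Hurwitz argument. The caveats you flag at the end (base-point-freeness, behavior over the node, transversality of the count against $B^{[d-2]}$) are not treated in any more detail in the paper either, so your write-up matches the paper's level of rigor.
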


To prove the above result, we exhibit a curve orthogonal to $$\left(\frac{d^2-d-6}{2d}\right)\!H^{[d-2]}-\frac{1}{2}B^{[d-2]}\,.$$ We then show that for $d=5,6$, the class $$\left(\frac{d^2-d-6}{2d}\right)\!H^{[d-2]}-\frac{1}{2}B^{[d-2]}$$ is in fact nef on $X^{[d-2]}$. Thus, we obtain the following result.

\begin{theorem}
	Let $X$ be an $NL$-general degree $d$ hypersurface in $\PP^3$ for $d=5$ or $6$. \begin{enumerate}
		\item If $d=5$, then the nef cone of $X^{[3]}$ is spanned by $H^{[3]}$ and $$\frac{7}{5}H^{[3]}-\frac{1}{2}B^{[3]}\,.$$
		\item If $d=6$, then the nef cone of $X^{[4]}$ is spanned by $H^{[4]}$ and $$2H^{[4]}-\frac{1}{2}B^{[4]}\,.$$
	\end{enumerate}
\end{theorem}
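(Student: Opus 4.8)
The plan is to deduce the theorem from the preceding one together with a single nefness verification. By the previous theorem the nef cone of $X^{[d-2]}$ is spanned by $H^{[d-2]}$ and a second edge $\alpha H^{[d-2]}-\tfrac12 B^{[d-2]}$ with $\alpha\geq\frac{d^2-d-6}{2d}$, and an explicit curve orthogonal to $C_0:=\frac{d^2-d-6}{2d}H^{[d-2]}-\tfrac12 B^{[d-2]}$ has been exhibited. Since $N^1\!\left(X^{[d-2]}\right)$ is two-dimensional, a class $\alpha' H^{[d-2]}-\tfrac12 B^{[d-2]}$ is nef precisely when $\alpha'\geq\alpha$; thus it suffices to prove that $C_0$ itself is nef, for this forces $\frac{d^2-d-6}{2d}\geq\alpha$ and hence equality. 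For $d=5$ the class $C_0$ is $\frac75 H^{[3]}-\tfrac12 B^{[3]}$, and for $d=6$ it is $2H^{[4]}-\tfrac12 B^{[4]}$.

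To prove nefness I would stratify the irreducible curves $\Gamma\subset X^{[d-2]}$ according to whether they lie in the locus $W$ of collinear length-$(d-2)$ subschemes, namely those contained in a line $\ell\subset\PP^3$ (which meets $X$ in $d$ points). For $\Gamma\not\subset W$ the estimate comes from pulling back nef classes under natural maps. When $d=6$ one writes $C_0=2H^{[4]}-\tfrac12 B^{[4]}=c_1\!\left(\cO_X(2)^{[4]}\right)$, the first Chern class of the tautological bundle of $\cO_X(2)$; a length-$4$ subscheme imposes independent conditions on quadrics unless it lies on a line, so this bundle is globally generated off $W$ and $C_0$ is base-point-free there, giving $C_0\cdot\Gamma\geq0$. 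When $d=5$ one instead writes
\[
C_0=\left(H^{[3]}-\tfrac12 B^{[3]}\right)+\tfrac25 H^{[3]},
\]
where $H^{[3]}-\tfrac12 B^{[3]}=c_1\!\left(\cO_X(1)^{[3]}\right)$ is the pullback of $\cO(1)$ under the map sending three points to the plane they span, base-point-free off the collinear locus $W$, while $H^{[3]}$ is nef everywhere; hence $C_0\cdot\Gamma\geq0$ for every $\Gamma\not\subset W$.

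The decisive case is that of curves $\Gamma\subset W$. Here I would analyze the geometry of $W$, which dominates the Grassmannian of lines $\Gr(2,4)$ via $Z\mapsto\langle Z\rangle$, the $d-2$ points of $Z$ ranging among the $d$ points of $\ell\cap X$. An irreducible curve in $W$ arises from a one-parameter family of lines together with a choice of $d-2$ of the intersection points; restricting $H^{[d-2]}$ and $B^{[d-2]}$ to such a family and computing the latter via Riemann--Hurwitz, one must show that $\frac{\Gamma\cdot B^{[d-2]}}{\Gamma\cdot H^{[d-2]}}\leq\frac{d^2-d-6}{d}$, with equality exactly for the pencil of lines through the node of a tangent hyperplane section of $X$; equivalently $C_0\cdot\Gamma\geq0$ on all of $W$.

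The main obstacle is precisely this last step. The extremal curve from the previous theorem lies inside $W$, so the maps above give no information there, and one must bound the $B$-to-$H$ ratio uniformly over all families of collinear configurations. This amounts to controlling, as the line $\ell$ varies and the chosen points undergo monodromy among the $d$ intersection points, the total ramification of the induced degree-$(d-2)$ maps to $\PP^1$; the content of the theorem is that the tangent-plane nodal family maximizes this ratio, and I expect the case analysis of such families (including degenerate and non-reduced members) to be the technically demanding part. Once this bound is established for $d=5,6$, the orthogonal curve certifies that $C_0$ lies on the boundary of the nef cone, completing the identification of both edges.
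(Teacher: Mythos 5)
There is a genuine gap, and you have located it yourself: the case of irreducible curves contained in the collinear locus $W=S_X$ is never actually handled. Your treatment of curves not contained in $S_X$ is correct and is essentially the paper's argument (for $d=6$ the pullback of $\sigma_1$ under $Z\mapsto\{\text{quadrics through }Z\}$ equals $2H^{[4]}-\tfrac12B^{[4]}$, and for $d=5$ one writes $\tfrac75H^{[3]}-\tfrac12B^{[3]}=\bigl(H^{[3]}-\tfrac12B^{[3]}\bigr)+\tfrac25H^{[3]}$ with the first summand pulled back from $\PP^{3*}$). But the extremal curve itself lives inside $S_X$, so the whole content of the theorem is concentrated in the case you defer, and the direct strategy you sketch --- bounding the ratio $\bigl(\Gamma\cdot B^{[d-2]}\bigr)/\bigl(\Gamma\cdot H^{[d-2]}\bigr)$ uniformly over all one-parameter families of collinear configurations, tracking monodromy among the $d$ points of $\ell\cap X$ and degenerate members --- amounts to computing the nef cone of $S_X$ by brute force and is not carried out.

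The paper closes this case with an idea absent from your proposal: the residuation isomorphism $\psi:X^{[2]}\xrightarrow{\sim}S_X$, sending $Z\in X^{[2]}$ to the residual scheme of $Z$ in $\ell\cap X$ for $\ell$ the line spanned by $Z$. Using two test curves on $X^{[2]}$ one computes $\psi^*i^*B^{[d-2]}=(d^2-3d)H^{[2]}-\tfrac{d^2-d-6}{2}B^{[2]}$ and $\psi^*i^*H^{[d-2]}=(d-1)H^{[2]}-\tfrac{d}{2}B^{[2]}$, so that
\[
\psi^*i^*\!\left(\frac{d^2-d-6}{2d}H^{[d-2]}-\frac12B^{[d-2]}\right)=\frac{d^2-5d+6}{2d}\,H^{[2]},
\]
a non-negative multiple of the nef class $H^{[2]}$. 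Hence $i^*C_0$ is nef on $S_X$ with no case analysis at all: the residuation isomorphism interchanges the Hilbert--Chow ray with the other edge, turning the hard stratum into a triviality. Without this (or a completed version of your ramification bound), the proposal does not prove the theorem.
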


Before we mention the remaining results, it is imperative to make the following definitions.

\begin{definition}\label{def:SurfaceFlex}
	Let $X$ be a degree $d\geq3$ hypersurface in $\PP^3$, and let $p\in X$.
	\begin{enumerate}
		\item We say that $p$ is a \emph{surface $r$-flex} of $X$ (for some $3\leq r\leq d$) if there is a hyperplane $\Lambda$ in $\PP^3$ containing $p$ such that the hyperplane section of $X$ given by $\Lambda$ has $p$ as a singular point of multiplicity at least $r$.
		\item We say that $p$ is an \emph{ordinary surface $r$-flex} of $X$ if there is a hyperplane $\Lambda$ in $\PP^3$ containing $p$ such that the hyperplane section of $X$ given by $\Lambda$ has $p$ as an ordinary singularity of multiplicity exactly $r$.
	\end{enumerate}
\end{definition}

Next, we argue that given $r\geq3$, once $d$ becomes large enough, there are $NL$-general surfaces of degree $d$ that have ordinary surface $r$-flexes. This ensures that the hypothesis of the following theorem is not an empty condition.

\begin{theorem}
	Let $X$ be an $NL$-general degree $d$ hypersurface in $\PP^3$ for some $d\geq5$, and let $r\geq3$. Suppose $X$ has an ordinary surface $r$-flex. Then the nef cone of $X^{[d-r]}$ is spanned by $H^{[d-r]}$ and $$\alpha H^{[d-r]}-\frac{1}{2}B^{[d-r]}$$ for some real number $$\alpha\geq\frac{d^2-d-r^2-r}{2d}\,.$$
\end{theorem}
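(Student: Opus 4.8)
The plan is to exhibit a rational curve $\Gamma$ on $X^{[d-r]}$ that is orthogonal to $\tfrac{d^2-d-r^2-r}{2d}H^{[d-r]}-\tfrac12 B^{[d-r]}$, and then to extract the lower bound on $\alpha$ from the fact that every nef class pairs non-negatively with $\Gamma$. First I would record the shape of the nef cone. Since $N^1(X^{[d-r]})$ is spanned by $H^{[d-r]}$ and $B^{[d-r]}$, it is a two-dimensional cone; the ray $\langle H^{[d-r]}\rangle$ is one extremal edge (it contracts curves), and a fiber $\Gamma_0\cong\PP^1$ of the Hilbert--Chow morphism, for which $\Gamma_0\cdot H^{[d-r]}=0$ and $\Gamma_0\cdot B^{[d-r]}=-2$, confines the nef cone to the half-plane in which the coefficient of $B^{[d-r]}$ is non-positive. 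Normalizing the remaining edge so that the coefficient of $B^{[d-r]}$ equals $-\tfrac12$, it takes the form $\alpha H^{[d-r]}-\tfrac12 B^{[d-r]}$ for some $\alpha>0$, and the problem reduces to bounding $\alpha$ from below.

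To build $\Gamma$ I would use the ordinary surface $r$-flex. Let $\Lambda$ be a hyperplane realizing the flex at $p$, so that $C:=\Lambda\cap X$ is a plane curve of degree $d$ with an ordinary point of multiplicity exactly $r$ at $p$. Consider the pencil of lines through $p$ in $\Lambda$, which is a $\PP^1$. For a general line $\ell$ in this pencil, $\ell\cap C$ has length $d$ and meets $C$ at $p$ with multiplicity exactly $r$ (the tangent cone at the ordinary $r$-fold point being $r$ distinct lines), so the residual subscheme $Z_\ell$ has length $d-r$ and lies on $C\subset X$. Letting $\ell$ vary yields a morphism $\PP^1\to X^{[d-r]}$ with image $\Gamma$. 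Concretely, if $\pi\colon\Bl_p\Lambda\to\Lambda$ is the blow-up at $p$ with exceptional divisor $E$, then the proper transform $\tilde C=\pi^{*}C-rE$ serves as the universal family over $\Gamma$: the pencil exhibits $\Bl_p\Lambda$ as a $\PP^1$-bundle over $\PP^1$, and $\tilde C$ is a $(d-r)$-sheeted cover of the base since $\tilde C\cdot(\pi^{*}\ell-E)=d-r$, while the evaluation map $\tilde C\to C\subset X$ is birational onto $C$.

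The two intersection numbers then follow from this universal family. Because the evaluation map carries $\tilde C$ birationally onto the hyperplane section $C\in|H|$, the traced-out $1$-cycle is $[C]$, so $\Gamma\cdot H^{[d-r]}=C\cdot H=H^{2}=d$. For the boundary, a point of $\Gamma$ lies on $B^{[d-r]}$ precisely when two of the $d-r$ residual points collide, that is, at a ramification point of the cover $\tilde C\to\PP^1$; as these collisions are transverse to $B^{[d-r]}$, one obtains $\Gamma\cdot B^{[d-r]}=\deg R$, the degree of the ramification divisor. If $C$ has no singularity apart from the ordinary $r$-fold point at $p$, then the single blow-up resolves $C$, so $\tilde C$ is smooth of geometric genus $g(\tilde C)=\binom{d-1}{2}-\binom{r}{2}$, and Riemann--Hurwitz gives $\deg R=2g(\tilde C)-2+2(d-r)=d^{2}-d-r^{2}-r$. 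Hence $\Gamma\cdot\big(\tfrac{d^2-d-r^2-r}{2d}H^{[d-r]}-\tfrac12 B^{[d-r]}\big)=0$, and nefness of $\alpha H^{[d-r]}-\tfrac12 B^{[d-r]}$ forces $\alpha\ge\frac{d^2-d-r^2-r}{2d}$.

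The step I expect to be the main obstacle is the genus computation, namely guaranteeing that $\tilde C$ is smooth of the asserted genus. This rests on the chosen section $C$ having no singularities beyond the prescribed ordinary $r$-fold point: any extra singularity changes the ramification count and the contribution to $\Gamma\cdot B^{[d-r]}$, and could weaken or complicate the bound. I would therefore either arrange the flex to lie on an otherwise smooth hyperplane section (using the genericity built into the existence of ordinary $r$-flexes) or account separately for the extra boundary contributions. A secondary point to verify is that $\PP^1\to X^{[d-r]}$ is non-constant and finite onto its image, so that $\Gamma$ is a genuine curve and the intersection numbers above are computed on $\Gamma$ itself rather than on a multiple of it.
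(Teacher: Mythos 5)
Your proposal is essentially the paper's own proof: the paper constructs exactly this curve by spinning lines through the $r$-fold point of the tangent-plane section $C$ and taking residual length-$(d-r)$ schemes, computes $\Omega\cdot H^{[d-r]}=d$ and $\Omega\cdot B^{[d-r]}=d^2-d-r^2-r$ via Riemann--Hurwitz applied to the degree-$(d-r)$ map $\tilde C\to\PP^1$ with $g(\tilde C)=\binom{d-1}{2}-\binom{r}{2}$, and concludes orthogonality. The issue you flag about possible extra singularities of $C$ is real but is present in the paper's argument as well (the genus formula there likewise presumes $p$ is the only singularity), so you have reproduced the argument at essentially the same level of detail.
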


The ordinary surface $r$-flex gives us a hyperplane section of $X$ with a point of multiplicity $r$, and this allows us to get hold of a curve on $X^{[d-r]}$ by ``spinning'' a line through the multiplicity $r$ point of the hyperplane section, and taking the residual length $d-r$ scheme of intersection of the line with $X$. This curve is orthogonal to $$\left(\frac{d^2-d-r^2-r}{2d}\right)\!H^{[d-r]}-\frac{1}{2}B^{[d-r]}\,.$$

For the case of quintics, upon showing the nefness of $$\frac{4}{5}H^{[2]}-\frac{1}{2}B^{[2]}$$ by pulling back a nef divisor via some morphisms, we obtain the following result as a corollary of the previous one.

\begin{corollary}
	Let $X$ be an $NL$-general quintic hypersurface in $\PP^3$ that has an ordinary surface $3$-flex. Then the nef cone of $X^{[2]}$ is spanned by $H^{[2]}$ and $$\frac{4}{5}H^{[2]}-\frac{1}{2}B^{[2]}\,.$$
\end{corollary}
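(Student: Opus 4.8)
The plan is to combine the lower bound $\alpha\ge\frac{4}{5}$ supplied by the previous theorem (specialized to $d=5$, $r=3$, where the orthogonal curve $\Gamma$ is produced by spinning a line through the multiplicity-$3$ point of the ordinary surface $3$-flex, with $\Gamma\cdot H^{[2]}=5$ and $\Gamma\cdot B^{[2]}=8$) with a matching upper bound, namely the nefness of $\frac{4}{5}H^{[2]}-\frac{1}{2}B^{[2]}$. Once this class is shown to be nef, the corollary follows immediately: the previous theorem already asserts that the nef cone is spanned by $H^{[2]}$ and $\alpha H^{[2]}-\frac{1}{2}B^{[2]}$ for some $\alpha\ge\frac{4}{5}$, so the nefness of $\frac{4}{5}H^{[2]}-\frac{1}{2}B^{[2]}$ forces $\alpha\le\frac{4}{5}$, whence $\alpha=\frac{4}{5}$. (That this class is genuinely extremal is automatic, since it pairs to zero with the effective curve $\Gamma$.)

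To prove nefness I would exhibit the class, up to a positive scalar, as the pullback of a nef class under a morphism. The key map is \emph{residuation along the spanning line}: for $Z\in X^{[2]}$ let $\ell_Z\subset\PP^3$ be the unique line spanned by $Z$, and define $\rho(Z)\in X^{[3]}$ to be the residual subscheme of $Z$ in $\ell_Z\cap X$. The first point to settle is that $\rho\colon X^{[2]}\to X^{[3]}$ is an honest \emph{morphism}, and this is exactly where the $NL$-general hypothesis enters: a line $L$ on $X$ would satisfy $H\cdot L=1$, impossible in $\Pic(X)=\mathbb{Z}H$ for $d\ge2$, so $X$ contains no lines and $\ell_Z\cap X$ is always a length-$5$ zero-dimensional scheme containing $Z$; residuation of divisors on the smooth rational curve $\ell_Z$ is then well-defined for every $Z$, including non-reduced $Z$ and $Z$ whose residual is non-reduced. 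Granting this, $\rho^*H^{[3]}$ is nef, because $H^{[3]}$ is nef on $X^{[3]}$ (it is the pullback of an ample class under the Hilbert--Chow morphism $X^{[3]}\to\Sym^3 X$).

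It then remains to compute $\rho^*H^{[3]}$ in the basis $\{H^{[2]},B^{[2]}\}$, which I would do with two test curves. Taking $R$, the Hilbert--Chow fiber over a general diagonal point (non-reduced schemes supported at a fixed general $p$, with $R\cdot H^{[2]}=0$ and $R\cdot B^{[2]}=-2$), the residual triples sweep out the tangent-plane section $\mathbb{T}_pX\cap X$, and counting those meeting a general plane $\Pi'$ gives $\rho^*H^{[3]}\cdot R=5$, so the $B^{[2]}$-coefficient is $-\frac{5}{2}$. Taking $C_H$, where one point moves along a degree-$e$ curve $C'\subset X$ and the other is a fixed general point $q$ (so $C_H\cdot H^{[2]}=e$, $C_H\cdot B^{[2]}=0$), the residual of $\{p,q\}$ meets $\Pi'$ precisely when $\pi_q(p)=\overline{pq}\cap\Pi'$ lies on the plane quintic $X\cap\Pi'$; this produces $5e$ incidences, from which one must \emph{subtract} the $e$ spurious ones with $p\in\Pi'$ (where $\pi_q(p)=p$ is not a residual point), giving $\rho^*H^{[3]}\cdot C_H=4e$ and $H^{[2]}$-coefficient $4$. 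Hence $\rho^*H^{[3]}=4H^{[2]}-\frac{5}{2}B^{[2]}=5\!\left(\frac{4}{5}H^{[2]}-\frac{1}{2}B^{[2]}\right)$ is nef, as desired.

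I expect the main obstacle to be the first step: checking rigorously that $\rho$ extends to a morphism over \emph{all} of $X^{[2]}$ — in particular over the boundary $B^{[2]}$ and over the loci where $\ell_Z$ is tangent to $X$ or where the residual scheme degenerates — since it is only for a morphism that the pullback of a nef class is guaranteed nef. The absence of lines on $X$ is the crucial input preventing $\ell_Z\cap X$ from ever becoming positive-dimensional, and I would confirm, using the universal spanning line and the flatness of $\ell_Z\cap X$ in families, that residuation varies algebraically. The pullback computation itself is routine once the bookkeeping of the spurious incidences (the correction $5e\mapsto 4e$) is handled correctly.
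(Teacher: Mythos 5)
Your proposal is correct and is essentially the paper's own argument: your map $\rho$ is exactly the composite of the residuation isomorphism $\psi\colon X^{[2]}\xrightarrow{\sim}S_X$ with the inclusion $i\colon S_X\hookrightarrow X^{[3]}$, and your computation $\rho^*H^{[3]}=4H^{[2]}-\tfrac{5}{2}B^{[2]}$ is the $d=5$ case of the paper's Lemma \ref{lemma:Pullbacks_01} (obtained there with the test curves $\Gamma_{[2]}$ and $\Psi_{[2]}$ rather than your $R$ and $C_H$, an immaterial difference), after which both arguments conclude by pairing against the nef cone bound from Theorem \ref{theorem:Nef-X(d-r)} with $d=5$, $r=3$.
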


We also analyze the case when an $NL$-general quintic does not have a surface $3$-flex, and obtain the following result.

\begin{theorem}
	Let $X$ be an $NL$-general quintic hypersurface in $\PP^3$ having no surface $3$-flexes. \begin{enumerate}
		\item The class $$\frac{4}{5}H^{[2]}-\frac{1}{2}B^{[2]}$$ is ample on $X^{[2]}$.\\
		
		\item The nef cone of $X^{[2]}$ is spanned by $H^{[2]}$ and $$\alpha H^{[2]}-\frac{1}{2}B^{[2]}$$ for some real number $\alpha$ with $\dfrac{7}{10}\leq\alpha<\dfrac{4}{5}$.
	\end{enumerate}
\end{theorem}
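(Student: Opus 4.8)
The plan is to treat the two parts together, noting first that since $N^1\!\left(X^{[2]}\right)$ has rank two with its nef cone spanned by $H^{[2]}$ and an edge $\alpha H^{[2]}-\frac{1}{2}B^{[2]}$, the class $\frac{4}{5}H^{[2]}-\frac{1}{2}B^{[2]}$ is ample if and only if it lies strictly inside the cone, i.e. if and only if $\alpha<\frac{4}{5}$. Thus part (1) is exactly the upper bound in part (2), and it remains to prove ampleness of $\frac{4}{5}H^{[2]}-\frac{1}{2}B^{[2]}$ together with the lower bound $\alpha\geq\frac{7}{10}$.

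For the lower bound I would produce a curve $\Gamma\subset X^{[2]}$ with $\left(B^{[2]}\cdot\Gamma\right)/\left(H^{[2]}\cdot\Gamma\right)=\frac{7}{5}$, so that nefness of $\alpha H^{[2]}-\frac{1}{2}B^{[2]}$ forces $\alpha\geq\frac{1}{2}\cdot\frac{7}{5}=\frac{7}{10}$. Since $X$ has no surface $3$-flex, there is no triple point on a plane section to spin a line through, so the residual-length-$2$ construction used in the earlier results is unavailable. Instead I would use the inflectional (asymptotic) tangent lines of the surface: a line meeting $X$ to order $3$ at a point $p$ has residual intersection of length $5-3=2$ with $X$. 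Letting $p$ range over a hyperplane section $C=X\cap\Lambda$ (a smooth plane quintic of genus $6$) and taking the two asymptotic directions at each point gives a double cover $\mathcal A_C\to C$ branched along $C\cap P$, where $P$ is the parabolic curve of $X$; the residual length-$2$ schemes then sweep out $\Gamma$. The numbers $H^{[2]}\cdot\Gamma$ and $B^{[2]}\cdot\Gamma$ are computed through this cover by Riemann--Hurwitz together with the classical degrees of the parabolic and flecnodal curves, and the ratio should come out to $\frac{7}{5}$. This curve exists on every quintic, so it yields the universal lower bound.

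For the ampleness in part (1), which is the heart of the theorem, I would start from the morphism used in the Corollary: there is a map $\phi$ and a nef class $N$ on its target with $\phi^{*}N=\frac{4}{5}H^{[2]}-\frac{1}{2}B^{[2]}$, which already gives nefness for every quintic. To upgrade nefness to ampleness, I would show that $\phi$ is finite under the no-$3$-flex hypothesis: if $\phi$ contracted a curve $\Gamma_0$, then $\Gamma_0$ would be orthogonal to $\frac{4}{5}H^{[2]}-\frac{1}{2}B^{[2]}$, i.e. of ratio exactly $\frac{8}{5}$, and I would argue that such an extremal curve must arise from spinning a line through a triple point of a plane section, forcing a surface $3$-flex and contradicting the hypothesis. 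As an unconditional first step and sanity check, the spanning-line morphism $\sigma\colon X^{[2]}\to\Gr(2,4)$, $Z\mapsto\langle Z\rangle$, is finite (its fibres $\{Z\subset \ell\cap X\}$ are finite because the $NL$-general surface $X$ contains no line) with $\sigma^{*}\OO(1)=H^{[2]}-\frac{1}{2}B^{[2]}$, so this class is ample and $\alpha<1$; the content of part (1) is to push this bound down to $\frac{4}{5}$.

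The main obstacle will be the implication ``ratio $\frac{8}{5}\Rightarrow$ surface $3$-flex''. Concretely, for an orthogonal curve $\Gamma_0$ the degree-$2$ family $\mathcal Z\to\Gamma_0$ of the moving length-$2$ schemes has ramification governed by $B^{[2]}\cdot\Gamma_0$, and the value $\frac{8}{5}$ pins down its genus to be that of a plane quintic with an ordinary triple point (geometric genus $3$); the difficulty is to show that any curve realizing this extremal ratio genuinely comes from a triple point of a hyperplane section rather than from some other configuration, i.e. to control all curves computing the boundary of the nef cone. Establishing this rigidity---equivalently, the finiteness of $\phi$ precisely in the absence of $3$-flexes---is where I expect the real work to lie.
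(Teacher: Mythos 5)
Your skeleton is right in outline---ampleness of $\frac{4}{5}H^{[2]}-\frac{1}{2}B^{[2]}$ is equivalent to the upper bound on $\alpha$, the lower bound needs a dual curve, and ampleness should come from finiteness of the map $\phi=\mathrm{HC}\circ i\circ\psi:X^{[2]}\to X^{(3)}$---but both halves have genuine gaps. For the lower bound, your curve is not the paper's and your asserted ratio is unsupported and almost certainly wrong. The paper's curve $\Omega_1$ fixes a single \emph{general} point $p$, spins the tangent lines $\ell\subset\mathbb{T}_pX$ through $p$, and residuates the collinear length-$3$ schemes $2p+q_i$; projecting the nodal tangent-plane section $C=X\cap\mathbb{T}_pX$ from $p$ gives a degree-$3$ map from a genus-$5$ curve to $\PP^1$, whence $\Omega_1\cdot B^{[2]}=14$ and $\Omega_1\cdot H^{[2]}=10$, ratio exactly $\frac{7}{5}$. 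Your curve (residuals of $3p$ along asymptotic tangent lines as $p$ varies over a hyperplane section) is a legitimate curve, but you never compute its numbers, and the ratio is not $\frac{7}{5}$: writing $h=\Gamma\cdot H^{[2]}$, $b=\Gamma\cdot B^{[2]}$, Lemma \ref{lemma:Pullbacks_01} forces $4h-\frac{5}{2}b=i\psi(\Gamma)\cdot H^{[3]}=10$, so the ratio $\frac{b}{h}=\frac{8}{5}-\frac{8}{5h}$ equals $\frac{7}{5}$ only if $h=20$, i.e.\ only if the residual points sweep out a curve of degree $20$ in $\PP^3$; a Schubert-calculus count of the asymptotic congruence along $C$ gives a far larger degree. (The correct numbers would in fact yield a bound \emph{stronger} than $\frac{7}{10}$, so the construction is not doomed, but as written the key computation is missing and the stated value is false.)

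For part (1), you correctly reduce to finiteness of $\phi$, but you then reroute through the much harder claim that \emph{every} irreducible curve orthogonal to $\frac{4}{5}H^{[2]}-\frac{1}{2}B^{[2]}$ (``ratio $\frac{8}{5}$'') arises from a $3$-flex, and you leave that open. You do not need this: a $\phi$-contracted curve is, via $\psi$, a one-parameter family of collinear length-$3$ subschemes of $X$ with \emph{constant} underlying $0$-cycle. If that cycle has two or three distinct support points, the spanning line is pinned down and the family is constant; if the line is constant, a curvilinear length-$5$ scheme has only finitely many length-$3$ subschemes. The only remaining case is a moving family of lines $\ell_t$ through a single point $p$ with $\mathrm{length}_p(\ell_t\cap X)\geq 3$ for all $t$, which forces the tangent-plane section $X\cap\mathbb{T}_pX$ to have multiplicity $\geq 3$ at $p$, i.e.\ $p$ is a surface $3$-flex---contradiction. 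This is exactly the paper's argument: it shows the composite $S_X\hookrightarrow X^{[3]}\to X^{(3)}$ is finite, concludes $i^*H^{[3]}$ is ample on $S_X$, and transports ampleness through the isomorphism $\psi$ using $\psi^*i^*\bigl(\frac{1}{5}H^{[3]}\bigr)=\frac{4}{5}H^{[2]}-\frac{1}{2}B^{[2]}$. So the ``real work'' you anticipate is a short case analysis, not a classification of extremal curves; as proposed, both the lower bound and the ampleness step remain unproven.
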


An interesting phenomenon should be pointed out here. In each of the cases covered by \cite{Bolognese et al.}, all $NL$-general hypersurfaces $X$ of a fixed degree had the same answer for the nef cone of $X^{[n]}$. However, this is not the case for $X^{[2]}$ for quintic hypersurfaces $X$, as we do not have a uniform answer for all $NL$-general quintics.

Finally, we compute the nef cones of Hilbert schemes of points on a smooth hypersurface containing a line, which is on the other extreme compared to $NL$-general ones.

\begin{theorem}
	Let $X$ be a degree $d\geq3$ smooth hypersurface in $\PP^3$ containing a line, and let $n\geq2$. Then the classes $H^{[n]}$ and $$(n-1)H^{[n]}-\frac{1}{2}B^{[n]}$$ span the nef cone of $X^{[n]}$ in the slice of the N\'{e}ron-Severi space spanned by $H^{[n]}$ and $B^{[n]}$.
\end{theorem}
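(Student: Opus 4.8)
The plan is to realize both proposed classes as boundary rays of $\Nef(X^{[n]})$ inside the slice $\langle H^{[n]},B^{[n]}\rangle$, by producing curves orthogonal to them, and then to check that both rays are in fact nef. Note first that since $X$ contains a line it has Picard rank at least $2$, so $\rho(X^{[n]})\geq3$ and the full nef cone is genuinely higher-dimensional; this is why one only describes the two-dimensional slice. The ray $H^{[n]}$ is already known to be extremal and nef, being the pullback of an ample class under the Hilbert--Chow morphism. It is contracted on the curves $\Gamma'$ obtained by fixing $n-2$ general reduced points of $X$ together with a length-two scheme supported at a fixed point $p$ and rotating its tangent direction: such a $\Gamma'$ lies in the exceptional locus and satisfies $H^{[n]}\cdot\Gamma'=0$ and $B^{[n]}\cdot\Gamma'<0$, which forces any nef class $aH^{[n]}+bB^{[n]}$ to have $b\leq0$ and thus pins down one wall of the slice.

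For the second generator I would first prove that $\mathcal{O}_X((n-1)H)$ is $(n-1)$-very ample. This follows by descent from $\PP^3$: the bundle $\mathcal{O}_{\PP^3}(n-1)$ is $(n-1)$-very ample, and for any length-$n$ subscheme $Z\subset X\subset\PP^3$ the surjection $H^0(\PP^3,\mathcal{O}(n-1))\to H^0(Z,\mathcal{O}_Z((n-1)H))$ factors through $H^0(X,\mathcal{O}_X((n-1)H))\to H^0(Z,\mathcal{O}_Z((n-1)H))$, which is therefore also surjective. Consequently the rank-$n$ tautological bundle $\mathcal{O}_X((n-1)H)^{[n]}$, with fibre $H^0(Z,\mathcal{O}_Z((n-1)H))$ over $Z$, is globally generated, and $Z\mapsto\ker\big(H^0(X,(n-1)H)\to H^0(Z,\mathcal{O}_Z((n-1)H))\big)$ defines a morphism $\gamma\colon X^{[n]}\to\Gr$ to a Grassmannian. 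The pullback $\gamma^*\mathcal{O}_{\Gr}(1)=\det\big(\mathcal{O}_X((n-1)H)^{[n]}\big)=(n-1)H^{[n]}-\tfrac12 B^{[n]}$ is then nef, being the pullback of the nef Pl\"ucker class.

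To place this class on the boundary of the slice, I would use the line $\ell\subset X$ directly. The inclusion $\ell\hookrightarrow X$ induces a closed embedding $\ell^{[n]}\cong\Sym^n(\PP^1)\cong\PP^n\hookrightarrow X^{[n]}$. Since $(n-1)H$ restricts to $\mathcal{O}_{\PP^1}(n-1)$ on $\ell$, any section vanishing on a length-$n$ subscheme $Z\subset\ell$ must vanish on all of $\ell$; hence the kernel above is independent of $Z\in\ell^{[n]}$, so $\gamma$ contracts $\ell^{[n]}$ to a point. Taking $\Gamma$ to be a line in $\PP^n=\ell^{[n]}$ gives $\big((n-1)H^{[n]}-\tfrac12 B^{[n]}\big)\cdot\Gamma=0$ automatically, while a direct computation yields $H^{[n]}\cdot\Gamma=1$ (divisors through the point $H\cap\ell$ form a hyperplane of $\PP^n$) and $B^{[n]}\cdot\Gamma=2(n-1)$ (the discriminant of binary $n$-forms has degree $2(n-1)$), consistent with orthogonality. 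Because $H^{[n]}\cdot\Gamma=1\neq0$, the curve $\Gamma$ is not the Hilbert--Chow ray, so it bounds the opposite wall: every nef $aH^{[n]}+bB^{[n]}$ satisfies $a+2(n-1)b\geq0$. Together with $b\leq0$ from $\Gamma'$, these inequalities cut out exactly the cone spanned by $H^{[n]}$ and $(n-1)H^{[n]}-\tfrac12 B^{[n]}$; since both generators are nef, this cone is precisely the nef slice.

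The main obstacle I anticipate is the nefness step, and specifically the careful identification $\gamma^*\mathcal{O}_{\Gr}(1)=(n-1)H^{[n]}-\tfrac12 B^{[n]}$, which requires the determinant-of-tautological-bundle computation together with the correct normalization of $B^{[n]}$ (the factor $\tfrac12$ reflecting that $\tfrac12 B^{[n]}$ is the primitive boundary class). The very-ampleness input itself is painless thanks to the descent from $\PP^3$, and the contraction of $\ell^{[n]}$ by $\gamma$ makes the orthogonality of $\Gamma$ automatic rather than a separate calculation; the remaining intersection numbers $H^{[n]}\cdot\Gamma$ and $B^{[n]}\cdot\Gamma$ are then routine.
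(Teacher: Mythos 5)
Your proposal is correct and follows essentially the same route as the paper: nefness of $(n-1)H^{[n]}-\tfrac12 B^{[n]}$ comes from pulling back the Pl\"ucker/Schubert class under the morphism to a Grassmannian defined by degree-$(n-1)$ forms (the paper packages this via an auxiliary scheme $Z'$ and a degeneracy divisor computed by Grothendieck--Riemann--Roch, while you use the determinant of the full tautological bundle, but these are the same computation), and extremality comes from the curve of length-$n$ schemes on the line $\ell$, which that morphism contracts. Your explicit intersection numbers $H^{[n]}\cdot\Gamma=1$, $B^{[n]}\cdot\Gamma=2(n-1)$ and the verification of the other wall via $\Gamma'$ are correct and only make explicit what the paper leaves implicit.
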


\subsection{The outline of this paper}

In \S\ref{sec:Nef-X-(d-2)}, we exhibit a lower bound on the nef cone of $X^{[d-2]}$ for an $NL$-general degree $d$ hypersurface $X$. After that, we show in \S\ref{sec:Nef-X3-X4} that this lower bound is actually the answer in the cases of $X^{[3]}$ for quintics and $X^{[4]}$ for sextics. In \S\ref{sec:Nef-X-(d-r)}, we compute a lower bound on the nef cone of $X^{[d-r]}$ for any $NL$-general degree $d$ hypersurface $X$ that has an ordinary surface $r$-flex. In \S\ref{sec:Nef-X2}, we discuss the implications of our results from \S\ref{sec:Nef-X-(d-r)} to the case of $X^{[2]}$ for quintics, thus calculating the nef cone assuming $X$ has an ordinary surface $3$-flex. We also analyze the case when the quintic $X$ does not contain any surface $3$-flexes. Finally in \S\ref{sec:Nef-NLspecial}, we compute the nef cones of the Hilbert schemes of points on a smooth hypersurface containing a line.

\subsection*{Acknowledgment} The author is highly indebted to Prof.\! Jack Huizenga for his guidance throughout this project. The author would also like to thank Prof.\! John Ottem and Prof.\! Izzet Coskun for their valuable feedback which led to several improvements in this paper.

\section{Preliminaries}\label{sec:Prelims}

Let $U_d$ denote the locus (in the projective space $\PP^{\binom{d+3}{3}-1}$ of all degree $d$ surfaces in $\PP^3$) of smooth degree $d$ surfaces. Recall that for $d\geq4$, there is a \emph{Noether-Lefschetz locus} $NL_d\subset U_d$\,, which is a countable union of proper closed subvarieties of $U_d$\,, such that the Picard group of any $X\in U_d\backslash NL_d$ is isomorphic to $\ZZ$, generated by the hyperplane class (see \cite[Theorem 7.5.4]{Carlson et al.}). We now make the following definition.

\begin{definition}
	We say that a surface $X$ of degree $d\geq4$ in $\PP^3$ is \emph{$NL$-general} if $X$ lies in $U_d\backslash NL_d$\,.
\end{definition}

\subsection{Preliminaries about $N^1\!\left(X^{[n]}\right)$}

Let $X$ be a smooth hypersurface of degree $d\geq4$ in $\PP^3$. For any integer $n\geq2$, let $X^{[n]}$ be the Hilbert scheme of $n$ points on $X$, parametrizing zero-dimensional closed subschemes of $X$ of length $n$. From \cite{Fogarty68}, we know that $X^{[n]}$ is a smooth projective variety of dimension $2n$, and the Hilbert-Chow morphism $X^{[n]}\rightarrow X^{(n)}$ is a resolution of singularities, where $X^{(n)}$ is the $n^{\mbox{\tiny th}}$ symmetric power of $X$.

Given any line bundle $L$ on $X$, we get an $S_n$-equivariant line bundle $L^{\boxtimes n}$ on $X^n$, which descends to a line bundle $L^{(n)}$ on the symmetric product $X^{(n)}$. The pullback of $L^{(n)}$ under the Hilbert-Chow morphism $X^{[n]}\rightarrow X^{(n)}$ is a line bundle on $X^{[n]}$, denoted by $L^{[n]}$. This construction preserves linear equivalence. Geometrically, if $L=\mathcal{O}_X(D)$ for some reduced effective divisor $D$ on $X$, then $L^{[n]}$ is represented by the locus $D^{[n]}$ of elements $Z\in X^{[n]}$ such that $Z$ meets $D$.

Let $B^{[n]}$ be the class in $\mbox{Pic}(X^{[n]})$ of the exceptional divisor of the Hilbert-Chow morphism. Equivalently, $B^{[n]}$ is the class of the locus of non-reduced schemes $Z\in X^{[n]}$.

Since $q(X)=0$, Fogarty shows (see \cite{Fogarty73}) that $\mbox{Pic}(X^{[n]})\cong\mbox{Pic}(X)\oplus\ZZ\!\left(B^{[n]}/2\right)$, where $\mbox{Pic}(X)$ is viewed as a subgroup of $\mbox{Pic}(X^{[n]})$ via the map $L\mapsto L^{[n]}$. Tensoring by $\RR$, we see that the N\'{e}ron-Severi space $N^1\!\left(X^{[n]}\right)$ is spanned by $N^1(X)$ and the class of $B^{[n]}$.

If $H$ is the hyperplane class on $X$, the class $H^{[n]}$ on $X^{[n]}$ is nef since it is the pullback of the ample line bundle $H^{(n)}$ via the Hilbert-Chow morphism, and it has curves orthogonal to it. So, if $X$ is $NL$-general, then $H^{[n]}$ spans one edge of the nef cone of $X^{[n]}$ and we only need to determine the other edge of the nef cone.

\subsection{The residuation isomorphism}\label{subsec:Residuation}

Given any $NL$-general degree $d\geq4$ hypersurface $X$, let $S_X\subset X^{[d-2]}$ be the locus of collinear subschemes, and let $i:S_X\hookrightarrow X^{[d-2]}$ be the inclusion morphism. We have a \textit{residuation isomorphism} $\psi:X^{[2]}\xrightarrow{\sim}S_X$ sending any $Z\in X^{[2]}$ to its residual scheme with respect to $\ell\cap X$, where $\ell$ is the line in $\PP^3$ spanned by $Z$. Note that this implies $S_X$ is a smooth, $4$-dimensional closed subvariety of $X^{[d-2]}$\,.

The residuation isomorphism will be very useful for us. For example, it will allow us to pullback known nef divisors on one of $X^{[2]}$ and $S_X$ to the other, thus giving us nef divisors on the other variety.

\subsection{Some curves}\label{subsec:SomeCurves}

Let $n\geq2$ be any integer. Fix a hypersurface $X$ in $\PP^3$, and let $d$ be its degree.

We obtain a curve class $\Gamma_{[n]}$ on $X^{[n]}$ by fixing $n-1$ distinct general points $p_{_1},\ldots,p_{_{n-1}}$ of $X$, and ``spinning'' a tangent vector to $X$ at $p_{_1}$\,. Then $\Gamma_{[n]}\!\cdot\!H^{[n]}=0$ and $\Gamma_{[n]}\!\cdot\!B^{[n]}=-2$ (see \cite[Chapter 3]{Huizenga12}).

By using the residuation isomorphism, $\Gamma_{[2]}$ gives us a curve $\psi\!\left(\Gamma_{[2]}\right)$ on $S_X$. It can be described as follows\,: take the hyperplane section $C:=X\cap\mathbb{T}_{p_{_1}}\!X$ of $X$ cut out by the tangent plane to $X$ at $p_{_1}$, ``spin'' a line through the node $p_{_1}$ of the plane curve $C$, and take the residual length $d-2$ scheme of intersection of the spinning line with $C$.

We shall also need two more types of curves on $X^{[n]}$\,:

\begin{itemize}

	\item $\Phi_{[n]}$ obtained by fixing $n-1$ points of $X$ and letting an $n^{\mbox{\tiny th}}$ point move along a general hyperplane section of $X$. Note that $\Phi_{[n]}\!\cdot\!H^{[n]}=d$ and $\Phi_{[n]}\!\cdot\!B^{[n]}=0$.\\[-5pt]

	\item $\Psi_{[n]}$ obtained by fixing $n-2$ points of $X$ and a general point of a general hyperplane section of $X$, and letting an $n^{\mbox{\tiny th}}$ point move along that hyperplane section. Using arguments similar to those given in \cite[Chapter 3]{Huizenga12}, it can be shown that $\Psi_{[n]}\!\cdot\!H^{[n]}=d$ and $\Psi_{[n]}\!\cdot\!B^{[n]}=2$.

\end{itemize}

\section{The nef cone of $X^{[d-2]}$ for a degree $d$ hypersurface}\label{sec:Nef-X-(d-2)}

Let $X$ be an $NL$-general degree $d$ hypersurface in $\PP^3$, for some $d\geq5$.

Recall from Section \ref{subsec:Residuation} above that $S_X\subset X^{[d-2]}$ is defined to be the locus of collinear subschemes of $X$, and we have a \emph{residuation isomorphism} $\psi:X^{[2]}\xrightarrow{\sim}S_X$. Let $i:S_X\hookrightarrow X^{[d-2]}$ be the inclusion morphism.

By exhibiting a dual curve, we now find a lower bound on the nef cone of $X^{[d-2]}$. Let $\Gamma_{[2]}$ be the curve on $X^{[2]}$ from Section \ref{subsec:SomeCurves}, contracted by the Hilbert-Chow morphism. Then we have the following intersection numbers.

\begin{lemma}\label{lemma:i_of_psi_of_Gamma2-IntersectionNumbers}
	We have $$i\!\left(\psi\!\left(\Gamma_{[2]}\right)\right)\!\cdot\!B^{[d-2]}=d^2-d-6$$ and $$i\!\left(\psi\!\left(\Gamma_{[2]}\right)\right)\!\cdot\!H^{[d-2]}=d.$$
\end{lemma}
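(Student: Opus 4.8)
\textbf{Proof proposal.} The plan is to read off both intersection numbers directly from the geometric description of the curve $\gamma := i\!\left(\psi\!\left(\Gamma_{[2]}\right)\right)$ recalled in \S\ref{subsec:SomeCurves}. Fix a general point $p_1 \in X$ and set $C := X \cap \mathbb{T}_{p_1}X$; for general $p_1$ this is an integral degree-$d$ plane curve with a single ordinary node at $p_1$ and smooth elsewhere. The curve $\gamma$ is obtained by letting a line $\ell$ vary in the pencil $B \cong \mathbb{P}^1$ of lines through $p_1$ in the plane $\mathbb{T}_{p_1}X$ and recording the residual length-$(d-2)$ scheme $Z_\ell$ of $\ell \cap C$ with respect to the node. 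I would first dispose of the easy number. Since $H^{[d-2]}$ is represented by the locus of schemes meeting a general hyperplane section $D = X \cap \Lambda$, and $D \cap C = \Lambda \cap C$ is cut on $C$ by the general line $\Lambda \cap \mathbb{T}_{p_1}X$, we obtain $\deg C = d$ points of $C$, each lying on a unique line of the pencil; hence $\gamma \cdot H^{[d-2]} = d$, provided these points are distinct from $p_1$ and span distinct lines, which holds for general $\Lambda$.

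The substance is the computation of $\gamma \cdot B^{[d-2]}$. The universal family of $\gamma$ is the cover $\pi : \mathcal Z \to B$ whose fiber over $[\ell]$ is $Z_\ell$; away from $p_1$ it is the projection of $C$ from the node, of degree $d-2$, and it extends over the normalization to a finite degree-$(d-2)$ morphism $\bar\phi : \tilde C \to \mathbb{P}^1 = B$. I would argue that $\mathcal Z \cong \tilde C$ as covers of $B$: a general line meets $C$ in $p_1$ with the node multiplicity $2$ together with $d-2$ further points, and the only delicate case, the two branch-tangent directions at the node, is handled by observing that over such a direction the corresponding branch point of $\tilde C$ is the unique preimage lying over $p_1$, so $\bar\phi$ is in fact \emph{unramified} there (equivalently, $Z_\ell$ acquires a reduced point at $p_1$ but stays reduced). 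Consequently $\tilde C$ is smooth and every collision among the points of $Z_\ell$ occurs exactly at a simple tangent line from $p_1$ to $C$, i.e. at a simple ramification point of $\bar\phi$. Since the total space $\tilde C$ is smooth and each such collision is a simple fold, $\gamma$ meets $B^{[d-2]}$ transversally once at each, so $\gamma \cdot B^{[d-2]}$ equals the number of branch points $\deg R_{\bar\phi}$. Riemann--Hurwitz for $\bar\phi$, using $g(\tilde C) = \binom{d-1}{2} - 1$, then yields $\deg R_{\bar\phi} = 2g - 2 + 2(d-2) = d^2 - d - 6$.

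The main obstacle is the precise bookkeeping of the local multiplicity of $\gamma$ along $B^{[d-2]}$. The point I would make explicit is that the factor is $1$ per tangent line precisely because $\tilde C$ is smooth and the collision is a fold: the two colliding residual points spring from a single branch and behave like $x \pm a\sqrt{s}$, so the discriminant $\prod_{i<j}(p_i-p_j)^2$ vanishes to order one. This must be contrasted with the transverse collision underlying $\Psi_{[d-2]} \cdot B^{[d-2]} = 2$, where two independent points cross and the discriminant vanishes to order two; the distinction is exactly whether the universal family is smooth at the collision, and it is this smoothness that forces the factor $1$ here. I would also justify the two general-position facts used above: that for general $p_1$ the tangent-plane section $C$ has a single ordinary node and is otherwise smooth (so that $g(\tilde C) = \binom{d-1}{2}-1$ and all ramification of $\bar\phi$ away from the node comes from honest tangencies), and that a general $\Lambda$ meets $C$ in $d$ points off $p_1$ spanning distinct lines of the pencil. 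Both are standard for $NL$-general, hence sufficiently general, $X$, and once they are in place the two displayed intersection numbers follow.
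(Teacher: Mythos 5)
Your proposal is correct and follows essentially the same route as the paper: identify the curve with the degree-$(d-2)$ projection $\tilde C\to\PP^1$ from the node, compute $g(\tilde C)=\binom{d-1}{2}-1$, apply Riemann--Hurwitz to get $d^2-d-6$, and read off the $H^{[d-2]}$ count from a general hyperplane section. The only difference is that you make explicit the multiplicity-one bookkeeping at each tangent line (via smoothness of the universal family and the contrast with the transverse-collision case), a point the paper leaves implicit in its ``It follows that'' step.
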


\begin{proof}
	The geometric description of the curve $\psi\!\left(\Gamma_{[2]}\right)$, as explained in Section \ref{subsec:SomeCurves} above, gives us a surjective mapping $\tilde{C}\rightarrow\PP^1$ of degree $(d-2)$, where $\tilde{C}$ is the normalization of the curve $$C:=X\cap\mathbb{T}_{p_{_1}}\!X.$$
	
	The projective plane curve $C$ has geometric genus $$\binom{d-1}{2}-1$$ since it is of degree $d$ and has a node at $p_{_1}$ as its only singularity. By Riemann-Hurwitz, the degree of the ramification divisor of the above map $\tilde{C}\rightarrow\PP^1$ is $d^2-d-6$. It follows that $$i\!\left(\psi\!\left(\Gamma_{[2]}\right)\right)\!\cdot\!B^{[d-2]}=d^2-d-6.$$
	
	Since $H^{[d-2]}$ is represented by the locus of schemes $Z\in X^{[d-2]}$ such that $Z$ meets a fixed general hyperplane section of $X$, we see that $$i\!\left(\psi\!\left(\Gamma_{[2]}\right)\right)\!\cdot\!H^{[d-2]}=d.$$
\end{proof}

From the above lemma, it follows that $$i\!\left(\psi\!\left(\Gamma_{[2]}\right)\right)\!\cdot\!\left(\left(\frac{d^2-d-6}{2d}\right)\!H^{[d-2]}-\frac{1}{2}B^{[d-2]}\right)=0.$$ This gives us a lower bound on the nef cone.

\begin{theorem}\label{theorem:Nef-X(d-2)}
	Let $X$ be a $NL$-general degree $d$ hypersurface in $\PP^3$, for some integer $d\geq5$. Then the nef cone of the Hilbert scheme $X^{[d-2]}$ is spanned by $H^{[d-2]}$ and $$\alpha H^{[d-2]}-\frac{1}{2}B^{[d-2]}$$ for some real number $$\alpha\geq\frac{d^2-d-6}{2d}\,.$$
\end{theorem}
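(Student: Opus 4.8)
The plan is to establish two facts: first, that $H^{[d-2]}$ spans one edge of the nef cone (already observed in the preliminaries, since $H^{[d-2]}$ is the pullback of the ample class $H^{(d-2)}$ via the Hilbert–Chow morphism and has curves like $\Gamma_{[d-2]}$ orthogonal to it); and second, that the \emph{other} edge of the nef cone is of the form $\alpha H^{[d-2]}-\frac{1}{2}B^{[d-2]}$ with $\alpha\geq\frac{d^2-d-6}{2d}$. Since $X$ is $NL$-general, the N\'{e}ron–Severi space $N^1(X^{[d-2]})$ is two-dimensional, spanned by $H^{[d-2]}$ and $B^{[d-2]}$, so the nef cone is a closed two-dimensional cone with exactly two extremal rays, one of which is $H^{[d-2]}$.

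The key step is to pin down the normalization of the second edge. Any nef class can be written as $aH^{[d-2]}-bB^{[d-2]}$ with $b\geq0$ (the coefficient of $B^{[d-2]}$ cannot be positive, since pairing against the Hilbert–Chow-contracted curve $\Gamma_{[d-2]}$, which satisfies $\Gamma_{[d-2]}\cdot H^{[d-2]}=0$ and $\Gamma_{[d-2]}\cdot B^{[d-2]}=-2$, must be nonnegative). Normalizing the non-$H^{[d-2]}$ edge so that the coefficient of $B^{[d-2]}$ equals $-\tfrac12$, I would write this edge as $\alpha H^{[d-2]}-\frac{1}{2}B^{[d-2]}$ for a well-defined real number $\alpha$. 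Then the content of the theorem reduces to the single inequality $\alpha\geq\frac{d^2-d-6}{2d}$.

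To prove that inequality, I would use the dual-curve criterion: a class on the second edge must pair nonnegatively with every curve class, and in particular nonnegatively with the curve $i(\psi(\Gamma_{[2]}))$ produced above. By Lemma~\ref{lemma:i_of_psi_of_Gamma2-IntersectionNumbers}, this curve satisfies $i(\psi(\Gamma_{[2]}))\cdot B^{[d-2]}=d^2-d-6$ and $i(\psi(\Gamma_{[2]}))\cdot H^{[d-2]}=d$, and the computation displayed just before the theorem statement shows it is orthogonal to $\left(\frac{d^2-d-6}{2d}\right)H^{[d-2]}-\frac{1}{2}B^{[d-2]}$. Pairing the extremal class $\alpha H^{[d-2]}-\frac{1}{2}B^{[d-2]}$ against $i(\psi(\Gamma_{[2]}))$ gives $\alpha d-\frac{1}{2}(d^2-d-6)\geq 0$, which rearranges to exactly $\alpha\geq\frac{d^2-d-6}{2d}$. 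Since the second edge is nef, it lies on or below (in the sense of smaller $B^{[d-2]}$-penalty relative to $H^{[d-2]}$) the orthogonal complement of this curve, forcing the bound.

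The only subtlety — and the step I expect to require the most care — is confirming that the second extremal ray really is of the normalized form $\alpha H^{[d-2]}-\frac12 B^{[d-2]}$ rather than having a vanishing $B^{[d-2]}$-coefficient (which would make both edges multiples of $H^{[d-2]}$, collapsing the cone). This amounts to checking that the nef cone is genuinely two-dimensional, i.e.\ that some class with strictly negative $B^{[d-2]}$-coefficient is nef; but this is immediate because the second edge of the nef cone is dual to an extremal curve class that meets $B^{[d-2]}$ positively (as $i(\psi(\Gamma_{[2]}))$ does, since $d^2-d-6>0$ for $d\geq 5$), so the corresponding nef edge must have a nonzero $B^{[d-2]}$-component. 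With that in hand, the normalization is legitimate and the displayed orthogonality computation delivers the claimed lower bound on $\alpha$.
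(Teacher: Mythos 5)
Your proposal is correct and follows essentially the same route as the paper: the entire content is the dual-curve computation, pairing the putative second extremal nef class against $i\!\left(\psi\!\left(\Gamma_{[2]}\right)\right)$ and using the intersection numbers of Lemma~\ref{lemma:i_of_psi_of_Gamma2-IntersectionNumbers} to force $\alpha\geq\frac{d^2-d-6}{2d}$. Your extra care about normalizing the second edge is fine, though the cleanest justification that the nef cone is genuinely two-dimensional is simply that $X^{[d-2]}$ is projective, so the ample cone is nonempty and open in the two-dimensional space $N^1\!\left(X^{[d-2]}\right)$, while $H^{[d-2]}$ lies on the boundary because it is orthogonal to $\Gamma_{[d-2]}$.
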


\section{The nef cone of $X^{[3]}$ for a quintic and $X^{[4]}$ for a sextic  hypersurface}\label{sec:Nef-X3-X4}

In this section, we shall argue that for $d=5,6$, the class $$\left(\frac{d^2-d-6}{2d}\right)\!H^{[d-2]}-\frac{1}{2}B^{[d-2]}$$ appearing in Theorem \ref{theorem:Nef-X(d-2)} is in fact nef on $X^{[d-2]}$, whence it must be an extremal nef class. In both cases, it will be fairly easy to see that it is nef outside of $S_X$, and on $S_X$ we use residuation to show nefness.

In the process, we shall be requiring two rational maps of a similar flavor, both defined outside the locus $S_X$\,. Namely, for a sextic $X$, we shall use the rational map $$X^{[4]}\dashrightarrow \Gr\!\left(6,H^0\!\left(\mathcal{O}_{\PP^3}(2)\right)\right)$$ taking any non-collinear $Z\in X^{[4]}$ to the subspace of quadratic forms vanishing on $Z$. And for a quintic $X$, we shall use the rational map $$X^{[3]}\dashrightarrow \PP^{3*}=\Gr\!\left(1,H^0\!\left(\mathcal{O}_{\PP^3}(1)\right)\right)$$ taking any non-collinear $Z\in X^{[3]}$ to the hyperplane in $\PP^3$ that it spans. For a more uniform description, this map can also be described as taking $Z$ to the locus of linear forms vanishing on $Z$.

Before specializing to the cases of quintics or sextics, let us prove a general lemma calculating the classes of some pullbacks under the residuation isomorphism $\psi:X^{[2]}\xrightarrow{\sim}S_X$\,, and obtain a corollary of these calculations.

\begin{lemma}\label{lemma:Pullbacks_01}
	With notation as above, we have $$\psi^*i^*B^{[d-2]}=(d^2-3d)H^{[2]}-\frac{(d^2-d-6)}{2}B^{[2]}$$ and $$\psi^*i^*H^{[d-2]}=(d-1)H^{[2]}-\frac{d}{2}B^{[2]}\,.$$
\end{lemma}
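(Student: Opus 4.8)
The goal is to compute the two pullbacks $\psi^*i^*B^{[d-2]}$ and $\psi^*i^*H^{[d-2]}$ as classes in $N^1(X^{[2]})$, which is spanned by $H^{[2]}$ and $B^{[2]}$. Since these are two-dimensional computations, the plan is to determine each pullback by intersecting it against a basis of curve classes on $X^{[2]}$ whose images under $i\circ\psi$ have known intersection numbers against $H^{[d-2]}$ and $B^{[d-2]}$. The natural curves to use are $\Gamma_{[2]}$ (contracted by Hilbert–Chow, with $\Gamma_{[2]}\!\cdot\!H^{[2]}=0$ and $\Gamma_{[2]}\!\cdot\!B^{[2]}=-2$) together with one of the transverse curves $\Phi_{[2]}$ or $\Psi_{[2]}$ from Section \ref{subsec:SomeCurves}, so that the pair spans $N_1(X^{[2]})$ and the resulting linear system is invertible.

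\medskip

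First I would record that by the projection formula, for any curve $\gamma$ on $X^{[2]}$ and any divisor $D^{[d-2]}$ on $X^{[d-2]}$ we have
\begin{equation*}
	\gamma\!\cdot\!\psi^*i^*D^{[d-2]}=i\!\left(\psi(\gamma)\right)\!\cdot\!D^{[d-2]}.
\end{equation*}
For $\gamma=\Gamma_{[2]}$ the right-hand sides are exactly the intersection numbers computed in Lemma \ref{lemma:i_of_psi_of_Gamma2-IntersectionNumbers}, namely $d^2-d-6$ against $B^{[d-2]}$ and $d$ against $H^{[d-2]}$. This immediately yields two linear equations: writing $\psi^*i^*B^{[d-2]}=aH^{[2]}+bB^{[2]}$ and using $\Gamma_{[2]}\!\cdot\!H^{[2]}=0$, $\Gamma_{[2]}\!\cdot\!B^{[2]}=-2$, one gets $-2b=d^2-d-6$, forcing $b=-\tfrac{d^2-d-6}{2}$, and similarly the $H^{[d-2]}$ computation forces the $B^{[2]}$-coefficient of $\psi^*i^*H^{[d-2]}$ to be $-\tfrac{d}{2}$. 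This already pins down the announced $B^{[2]}$-coefficients of both classes.

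\medskip

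The remaining unknowns are the $H^{[2]}$-coefficients, claimed to be $d^2-3d$ and $d-1$ respectively. To extract these I would intersect with a second curve transverse to the exceptional locus, say $\Psi_{[2]}$, and compute the image curve $\psi(\Psi_{[2]})\subset S_X$ geometrically: under residuation, fixing a point and moving a second collinear point traces out a family of length $d-2$ residual schemes, and I would count how this family meets the loci representing $H^{[d-2]}$ (schemes meeting a fixed hyperplane section) and $B^{[d-2]}$ (non-reduced schemes). Plugging the resulting two intersection numbers into the linear system and solving for the $H^{[2]}$-coefficients should reproduce $d^2-3d$ and $d-1$.

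\medskip

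The main obstacle will be the geometric computation of the intersection numbers $i(\psi(\Psi_{[2]}))\!\cdot\!B^{[d-2]}$ and $i(\psi(\Psi_{[2]}))\!\cdot\!H^{[d-2]}$, since this requires understanding precisely how the residual scheme degenerates as the spinning line becomes tangent to $X$ or passes through special configurations — that is, a careful count of where the residual length $d-2$ scheme becomes non-reduced. An alternative that may sidestep delicate ramification counts is to instead describe the pullbacks $\psi^*i^*H^{[d-2]}$ and $\psi^*i^*B^{[d-2]}$ directly as geometric loci on $X^{[2]}$ (for instance, the $H^{[2]}$-part records how often the residual scheme meets a hyperplane section, which decomposes into a contribution from the original pair $Z$ meeting the section plus a contribution counted along the line) and read off the coefficients from that decomposition; I would cross-check both methods against the $\Gamma_{[2]}$ equations above to confirm consistency.
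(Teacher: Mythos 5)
Your approach is the same as the paper's: both determine the two pullbacks by intersecting against the basis of curve classes $\Gamma_{[2]}$ and $\Psi_{[2]}$, using the projection formula and Lemma \ref{lemma:i_of_psi_of_Gamma2-IntersectionNumbers} to pin down the $B^{[2]}$-coefficients exactly as you do. However, the step you defer as ``the main obstacle'' is precisely the substantive content of the proof, and without it the coefficients $d^2-3d$ and $d-1$ are asserted rather than derived. The paper closes this as follows: for $Z=\{p,q\}\in\Psi_{[2]}$ both points lie on the plane curve $\mathcal{C}=\mathcal{H}\cap X$, so the residual scheme of $Z$ is cut out on $\mathcal{C}$ by the line $pq\subset\mathcal{H}$, and the whole family is governed by the degree $d-1$ projection $\eta:\mathcal{C}\to\PP^1$ from $p$. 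By Riemann--Hurwitz (using $g(\mathcal{C})=\binom{d-1}{2}$) there are $d^2-d-2$ tangent lines to $\mathcal{C}$ through the general point $p$, none of them bitangent, and for each such line exactly $d-3$ of the points in the corresponding fiber of $\eta$ yield a non-reduced residual (the tangency point itself does not), giving $i\bigl(\psi\bigl(\Psi_{[2]}\bigr)\bigr)\!\cdot\!B^{[d-2]}=(d-3)(d^2-d-2)$; and a general hyperplane meets $\mathcal{C}$ in $d$ points each of whose residuals consists of $d-2$ further points of $\mathcal{C}$, giving $i\bigl(\psi\bigl(\Psi_{[2]}\bigr)\bigr)\!\cdot\!H^{[d-2]}=d(d-2)$. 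Feeding these into your linear system (with $\Psi_{[2]}\!\cdot\!H^{[2]}=d$, $\Psi_{[2]}\!\cdot\!B^{[2]}=2$) does indeed yield $d^2-3d$ and $d-1$, so your plan is sound but needs this count carried out to be a complete proof.
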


\begin{proof}
	Let us write $$\psi^*i^*B^{[d-2]}=\alpha H^{[2]}-\frac{\beta}{2}B^{[2]}\mbox{ and }\psi^*i^*H^{[d-2]}=\alpha'H^{[2]}-\frac{\beta'}{2}B^{[2]}$$ for some real numbers $\alpha,\beta,\alpha',\beta'$. First, let us compute $\alpha$ and $\beta$.
	
	Recall the curve $\Gamma_{[2]}$ from Section \ref{subsec:SomeCurves}. From Lemma \ref{lemma:i_of_psi_of_Gamma2-IntersectionNumbers}, it follows that $$\Gamma_{[2]}\!\cdot\!\left(\psi^*i^*B^{[d-2]}\right)=i\!\left(\psi\!\left(\Gamma_{[2]}\right)\right)\!\cdot\!B^{[d-2]}=d^2-d-6.$$ Since $\Gamma_{[2]}\!\cdot\!H^{[2]}=0$ and $\Gamma_{[2]}\!\cdot\!B^{[2]}=-2$, we get $\beta=d^2-d-6$.
	
	Next, we compute $\alpha$. Let $\Psi_{[2]}$ be the curve on $X^{[2]}$ obtained by fixing a general point $p$ of a general hyperplane section $\mathcal{C}$ of $X$, and letting a second point move along $\mathcal{C}$. Projection of $\mathcal{C}$ from $p$ gives a map $\eta:\mathcal{C}\rightarrow\PP^1$ of degree $d-1$. Since $p$ is a general point of $\mathcal{C}$, no line passing through $p$ is going to be a double tangent to $\mathcal{C}$. It follows that for every branch point $t$ of $\eta$, there are $d-3$ points $q_{_{t,1}}\,,\ldots,q_{_{t,d-3}}$ in the fiber of $\eta$ over $t$ for which the residuation of the scheme $\{p,q_{_{t,i}}\}$ is non-reduced ($i=1,\ldots,d-3$). Also note that for each such $t$, the scheme $\{p,q_{_{t,i}}\}$ belongs to $\Psi_{[2]}$\,, for $i=1,\ldots,d-3$. Since $\mathcal{C}$ has genus $\binom{d-1}{2}$, by Riemann-Hurwitz the degree of the ramification divisor of $\eta$ is $d^2-d-2$.  Since $i\!\left(\psi\!\left(\Psi_{[2]}\right)\right)\!\cdot\!B^{[d-2]}$ counts the number of points of $\Psi_{[2]}$ whose residuation is non-reduced, it follows that $$\Psi_{[2]}\!\cdot\!\left(\psi^*i^*B^{[d-2]}\right)=i\!\left(\psi\!\left(\Psi_{[2]}\right)\right)\!\cdot\!B^{[d-2]}=(d-3)\cdot(d^2-d-2).$$ Recalling that $\Psi_{[2]}\!\cdot\!H^{[2]}=d$ and $\Psi_{[2]}\!\cdot\!B^{[2]}=2$, we get $\alpha=d^2-3d$.
	
	Hence, $$\psi^*i^*B^{[d-2]}=(d^2-3d)H^{[2]}-\frac{(d^2-d-6)}{2}B^{[2]}\,.$$
	
	Next, we compute $\alpha'$ and $\beta'$. From Lemma \ref{lemma:i_of_psi_of_Gamma2-IntersectionNumbers}, we have $$\Gamma_{[2]}\!\cdot\!\left(\psi^*i^*H^{[d-2]}\right)=i\!\left(\psi\!\left(\Gamma_{[2]}\right)\right)\!\cdot\!H^{[d-2]}=d\,.$$ Thus, $\beta'=d$.
	
	Finally, observe that a general hyperplane $\mathcal{H}$ in $\PP^3$ meets $\mathcal{C}$ in $d$ distinct points (say $x_{_1}\,,\ldots,x_{_d}$), and does not pass through $p$. By all the generality hypotheses, it follows that for each $1\leq i\leq d$, the residuation of the scheme $\{p,x_{_i}\}$ consists of $d-2$ distinct points, say $y_{_{i,1}}\,,\ldots,y_{_{i,d-2}}$\,, and none of these is equal to any of the $x_{_j}$'s. Then we have exactly $d(d-2)$ points $\{p,y_{_{i,j}}\}\in\Psi_{[2]}$ ($1\leq i\leq d$ and $1\leq j\leq d-2$) for which the residual scheme meets $\mathcal{H}$. Hence,  $$\Psi_{[2]}\!\cdot\!\left(\psi^*i^*H^{[d-2]}\right)=i\!\left(\psi\!\left(\Psi_{[2]}\right)\right)\!\cdot\!H^{[d-2]}=d(d-2).$$
	
	Therefore, $\alpha'=d-1$, whence $$\psi^*i^*H^{[d-2]}=(d-1)H^{[2]}-\frac{d}{2}B^{[2]}\,.$$
	
\end{proof}

We then get the following result.

\begin{lemma}\label{lemma:NefnessOnS_X}
	Let the hypotheses be as in Lemma \ref{lemma:Pullbacks_01}. Then the class $$i^*\!\left(\frac{(d^2-d-6)}{2d}H^{[d-2]}-\frac{1}{2}B^{[d-2]}\right)\in N^1\!\left(S_X\right)$$ is nef on $S_X$\,.
\end{lemma}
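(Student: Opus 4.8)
The plan is to use the residuation isomorphism $\psi:X^{[2]}\xrightarrow{\sim}S_X$ to transport the question back to $X^{[2]}$, where the geometry is more tractable, and then to verify that the pullback of the class in question is nef on $X^{[2]}$. Since $\psi$ is an isomorphism, a class on $S_X$ is nef if and only if its pullback under $\psi$ is nef. So I would compute
\begin{equation*}
	\psi^*i^*\!\left(\frac{d^2-d-6}{2d}H^{[d-2]}-\frac{1}{2}B^{[d-2]}\right)
\end{equation*}
directly from Lemma \ref{lemma:Pullbacks_01} and then argue that the resulting class on $X^{[2]}$ is nef.

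First I would substitute the formulas from Lemma \ref{lemma:Pullbacks_01}. Writing $A:=\frac{d^2-d-6}{2d}H^{[d-2]}-\frac{1}{2}B^{[d-2]}$, linearity gives
\begin{equation*}
	\psi^*i^*A=\frac{d^2-d-6}{2d}\left((d-1)H^{[2]}-\frac{d}{2}B^{[2]}\right)-\frac{1}{2}\left((d^2-3d)H^{[2]}-\frac{d^2-d-6}{2}B^{[2]}\right).
\end{equation*}
I expect the $B^{[2]}$ coefficients to cancel exactly (both contributions equal $-\frac{d^2-d-6}{4}B^{[2]}$), so that $\psi^*i^*A$ is a positive multiple of $H^{[2]}$. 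Collecting the $H^{[2]}$ coefficient should yield something like $\frac{(d^2-d-6)(d-1)}{2d}-\frac{d^2-3d}{2}$, which for $d\geq 3$ simplifies to a positive rational number times $H^{[2]}$. Once I see that $\psi^*i^*A$ is a nonnegative multiple of $H^{[2]}$, nefness on $S_X$ follows immediately, because $H^{[2]}$ is nef on $X^{[2]}$ (it is pulled back from the ample $H^{(2)}$ via the Hilbert--Chow morphism, as noted in the preliminaries), and $\psi$ is an isomorphism.

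The key steps, in order, are: (i) reduce nefness on $S_X$ to nefness of $\psi^*i^*A$ on $X^{[2]}$ using that $\psi$ is an isomorphism; (ii) plug in the two formulas from Lemma \ref{lemma:Pullbacks_01} and simplify; (iii) observe the $B^{[2]}$ terms cancel and the $H^{[2]}$ coefficient is nonnegative; (iv) conclude from nefness of $H^{[2]}$. I would double-check the arithmetic carefully, since the whole argument hinges on the exact cancellation of the $B^{[2]}$ coefficients.

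The main obstacle here is not conceptual but arithmetic: the entire conclusion rests on the $B^{[2]}$ coefficient vanishing identically, which is a small miracle of the residuation formulas in Lemma \ref{lemma:Pullbacks_01}. If instead the simplified class were $aH^{[2]}-bB^{[2]}$ with $b>0$, then nefness would be a genuinely delicate question on $X^{[2]}$ (one would have to know the nef cone of $X^{[2]}$, which is precisely the kind of hard problem the paper is trying to solve). The fortunate structural feature is that the chosen coefficient $\frac{d^2-d-6}{2d}$ was designed to make $A$ orthogonal to $i(\psi(\Gamma_{[2]}))$, and $\Gamma_{[2]}$ is exactly the fiber class contracted by residuation, so the pullback kills the $B^{[2]}$ direction entirely. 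Thus the reason the argument works is that $A$ was built to annihilate the very curve class that $\psi^*i^*$ sees as the $B^{[2]}$ (i.e.\ Hilbert--Chow-exceptional) direction, leaving only the nef $H^{[2]}$ part.
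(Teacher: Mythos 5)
Your proposal is correct and follows exactly the paper's own argument: pull back along $\psi^*i^*$, observe that the $B^{[2]}$ coefficients cancel, and conclude from nefness of $H^{[2]}$; the resulting class is $\frac{(d^2-5d+6)}{2d}H^{[2]}=\frac{(d-2)(d-3)}{2d}H^{[2]}$, a nonnegative multiple of $H^{[2]}$ as you predicted. Your closing observation about why the cancellation is forced (the class was built to annihilate $i(\psi(\Gamma_{[2]}))$, and the only classes on $X^{[2]}$ orthogonal to $\Gamma_{[2]}$ are multiples of $H^{[2]}$) is a correct and useful gloss, modulo the small slip that $\Gamma_{[2]}$ is contracted by the Hilbert--Chow morphism, not by the residuation isomorphism.
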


\begin{proof}
	From Lemma \ref{lemma:Pullbacks_01} it follows that $$\psi^*i^*\!\left(\frac{(d^2-d-6)}{2d}H^{[d-2]}-\frac{1}{2}B^{[d-2]}\right)=\frac{(d^2-5d+6)}{2d}H^{[2]}\,.$$ Thus, $$i^*\!\left(\frac{(d^2-d-6)}{2d}H^{[d-2]}-\frac{1}{2}B^{[d-2]}\right)=\left(\psi^{-1}\right)^*\!\left(\frac{(d^2-5d+6)}{2d}H^{[2]}\right)$$ is a nef class on $S_X$.
\end{proof}

\begin{remark}
	From Lemma \ref{lemma:Pullbacks_01} and (the proof of) Lemma \ref{lemma:NefnessOnS_X}, we see that the residuation isomorphism $\psi:X^{[2]}\xrightarrow{\sim}S_X$ ``interchanges'' the ray spanned by the class $H^{[n]}$ ($n=2,d-2$) of the divisor inducing the Hilbert-Chow morphism with a non-trivial ray spanned by some other class.
\end{remark}

\subsection{The nef cone of $X^{[3]}$ for a quintic}\label{subsec:Nef-X3}

In this subsection, $X$ will be an $NL$-general quintic hypersurface in $\PP^3$, and let $\varphi_{_1}:X^{[3]}\dashrightarrow\PP^{3*}$ be the rational map taking any non-collinear $Z$ to the hyperplane in $\PP^3$ that it spans. So $S_X$ is precisely the indeterminacy locus of $\varphi_{_1}$\,.

Since $S_X\cong X^{[2]}$ has codimension $2$ in $X^{[3]}$, restriction gives an isomorphism $$\mbox{Pic}(X^{[3]})\xrightarrow{\sim}\mbox{Pic}(X^{[3]}\backslash S_X).$$ Therefore, it makes sense to talk of the pullback of a divisor (and numerical) class from $\PP^{3*}$ to $X^{[3]}$ via $\varphi_{_1}$\,.

Our current goal is to prove the following theorem. \begin{theorem}\label{theorem:Nef-X3} Let $X$ be an $NL$-general quintic hypersurface in $PP^3$. Then the nef cone of the Hilbert scheme $X^{[3]}$ is spanned by $H^{[3]}$ and $$\frac{7}{5}H^{[3]}-\frac{1}{2}B^{[3]}\,.$$
\end{theorem}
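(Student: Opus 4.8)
The plan is to prove nefness of the class $\frac{7}{5}H^{[3]}-\frac{1}{2}B^{[3]}$ on $X^{[3]}$; combined with Theorem~\ref{theorem:Nef-X(d-2)} (which for $d=5$ gives the lower bound $\alpha\geq\frac{7}{5}$ via the dual curve $i(\psi(\Gamma_{[2]}))$), this pins down the nef cone exactly. The strategy, as hinted in the section preamble, splits the variety into the open locus $X^{[3]}\setminus S_X$ of non-collinear schemes and the closed locus $S_X$ of collinear ones, and establishes positivity separately on each.

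The first step is to handle $S_X$. By Lemma~\ref{lemma:NefnessOnS_X} with $d=5$, the restriction $i^*\bigl(\frac{7}{5}H^{[3]}-\frac{1}{2}B^{[3]}\bigr)$ equals $(\psi^{-1})^*\bigl(\frac{3}{5}H^{[2]}\bigr)$, which is nef on $S_X$ since it is a positive multiple of a pullback of the nef class $H^{[2]}$ under the residuation isomorphism. So any irreducible curve lying \emph{inside} $S_X$ pairs nonnegatively with our class, and we are reduced to curves not contained in $S_X$.

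The second, main step is to produce enough positivity away from $S_X$ using the rational map $\varphi_{_1}:X^{[3]}\dashrightarrow\PP^{3*}$ sending a non-collinear $Z$ to the plane it spans. I would pull back the hyperplane class $\cO_{\PP^{3*}}(1)$ via $\varphi_{_1}$; since $S_X$ has codimension $2$ in $X^{[3]}$, restriction gives an isomorphism $\Pic(X^{[3]})\xrightarrow{\sim}\Pic(X^{[3]}\setminus S_X)$, so $\varphi_{_1}^*\cO(1)$ defines a genuine numerical class on $X^{[3]}$, which I would compute in terms of $H^{[3]}$ and $B^{[3]}$ by intersecting against test curves such as $\Phi_{[3]}$ and $\Psi_{[3]}$ from Section~\ref{subsec:SomeCurves}. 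The expectation is that $\varphi_{_1}^*\cO(1)$ is a positive combination of $H^{[3]}$ and the target class $\frac{7}{5}H^{[3]}-\frac{1}{2}B^{[3]}$ (equivalently, that these two classes, together with the known nefness of $H^{[3]}$, exhibit $\frac{7}{5}H^{[3]}-\frac{1}{2}B^{[3]}$ as an effective nonnegative combination of a pullback of an ample class and a nef class). Because $\varphi_{_1}^*\cO(1)$ is a pullback of an ample class under a morphism defined on $X^{[3]}\setminus S_X$, it is nef on curves meeting that open locus; combining this with the nefness of $H^{[3]}$ should yield nonnegativity of $\frac{7}{5}H^{[3]}-\frac{1}{2}B^{[3]}$ against every irreducible curve not contained in $S_X$.

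The hard part will be the second step: $\varphi_{_1}$ is only a rational map, so I must control its behavior along the indeterminacy locus $S_X$ and argue that the naive pullback of a nef divisor remains nef on curves that approach or meet $S_X$ but are not contained in it. Concretely, one wants that for a curve $\Gamma\not\subset S_X$, the intersection $\Gamma\cdot\varphi_{_1}^*\cO(1)\geq 0$ even accounting for points where $\Gamma$ hits the indeterminacy locus; this requires checking that resolving $\varphi_{_1}$ (or using the codimension-$2$ extension of line bundles) only adds nonnegative contributions, or equivalently that the transform of $\cO(1)$ picks up a nonnegative multiple of the exceptional/base locus. Once the class $\varphi_{_1}^*\cO(1)$ is identified and this positivity-across-the-boundary issue is settled, the desired nefness on all of $X^{[3]}$ follows by combining the two steps, and extremality is already guaranteed by the orthogonal curve of Lemma~\ref{lemma:i_of_psi_of_Gamma2-IntersectionNumbers}.
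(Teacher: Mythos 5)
Your proposal follows essentially the same route as the paper: nefness on $S_X$ via Lemma \ref{lemma:NefnessOnS_X}, nonnegativity on curves not contained in $S_X$ by computing $\varphi_{_1}^*\mathcal{O}_{\PP^{3*}}(1)=H^{[3]}-\frac{1}{2}B^{[3]}$ against the test curves $\Gamma_{[3]}$ and $\Phi_{[3]}$ and then writing $\frac{7}{5}H^{[3]}-\frac{1}{2}B^{[3]}=\varphi_{_1}^*\mathcal{O}_{\PP^{3*}}(1)+\frac{2}{5}H^{[3]}$, and extremality from Theorem \ref{theorem:Nef-X(d-2)}. The indeterminacy issue you flag is resolved exactly as you suggest (for an irreducible curve $C\not\subset S_X$ the class is represented by the locus of $Z$ spanning a plane through a general point $w\in\PP^3$, which can be chosen not to contain $C$); just note that the operative statement is the one in your parenthetical, namely that the target class is a nonnegative combination of $\varphi_{_1}^*\mathcal{O}_{\PP^{3*}}(1)$ and $H^{[3]}$, not the literal claim that $\varphi_{_1}^*\mathcal{O}_{\PP^{3*}}(1)$ is a positive combination of $H^{[3]}$ and the target class.
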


Briefly, the idea is as follows. We first show in Lemma \ref{lemma:NefnessOutsideS_X} that for any real number $a\geq1$, the numerical equivalence class $$aH^{[3]}-\frac{1}{2}B^{[3]}$$ non-negatively intersects any irreducible curve on $X^{[3]}$ that is not contained in $S_X$\,. In particular, this applies to $$\frac{7}{5}H^{[3]}-\frac{1}{2}B^{[3]}\,.$$ By Lemma \ref{lemma:NefnessOnS_X}, we already know that its restriction to $S_X$ is nef. Theorem \ref{theorem:Nef-X(d-2)} then completes the argument. We now present the details. We shall need a couple of lemmas.

\begin{lemma}\label{lemma:PullbackOfLfromP3*}
	Let $L$ be the hyperplane class in $\PP^{3*}$. Then $$\varphi_{_1}^*L=H^{[3]}-\frac{1}{2}B^{[3]}\,.$$
\end{lemma}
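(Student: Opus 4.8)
The goal is to compute the pullback under $\varphi_{_1}$ of the hyperplane class $L$ on $\PP^{3*}$, where $\varphi_{_1}$ sends a non-collinear length-$3$ subscheme $Z$ to the plane it spans. The plan is to write $\varphi_{_1}^*L = aH^{[3]} + bB^{[3]}$ for unknown coefficients $a,b$ (recall that $H^{[3]}$ and $B^{[3]}$ span $N^1(X^{[3]})$, and the restriction isomorphism $\Pic(X^{[3]})\xrightarrow{\sim}\Pic(X^{[3]}\sm S_X)$ makes the pullback well-defined), and then pin down $a$ and $b$ by intersecting against two test curves whose numerical data we already know.

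The natural curves to use are $\Phi_{[3]}$ and $\Psi_{[3]}$ from Section~\ref{subsec:SomeCurves}, since both consist of generically non-collinear schemes (so they meet $S_X$ in at most finitely many points and the intersection computation is geometric), and we already know $\Phi_{[3]}\!\cdot\!H^{[3]}=d=5$, $\Phi_{[3]}\!\cdot\!B^{[3]}=0$, $\Psi_{[3]}\!\cdot\!H^{[3]}=5$, $\Psi_{[3]}\!\cdot\!B^{[3]}=2$. First I would compute $\Phi_{[3]}\!\cdot\!\varphi_{_1}^*L$. The curve $\Phi_{[3]}$ fixes two points $p_1,p_2$ of $X$ and lets a third point $q$ move along a general hyperplane section $\cC$. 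The spanned plane is the plane through the line $\overline{p_1p_2}$ and the moving point $q$; this is a pencil of planes, and $\varphi_{_1}\!\circ\!\Phi_{[3]}$ is a line in $\PP^{3*}$, so $\Phi_{[3]}\!\cdot\!\varphi_{_1}^*L$ equals the degree of the image map, which I expect to be $d=5$ (as $q$ ranges over the $d$ points where $\cC$ meets a fixed plane through $\overline{p_1p_2}$, or equivalently by projecting $\cC$ from the line). This yields one linear equation $5a = 5$, giving $a=1$.

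Next I would compute $\Psi_{[3]}\!\cdot\!\varphi_{_1}^*L$. Here $\Psi_{[3]}$ fixes one point $p_1$ of $X$ and one general point $p$ on a general hyperplane section $\cC$, and moves a third point $q$ along $\cC$. The plane spanned by $\{p_1,p,q\}$ passes through the fixed line $\overline{p_1p}$, so again $\varphi_{_1}\!\circ\!\Psi_{[3]}$ traces a line in $\PP^{3*}$ and the intersection number is the degree of this map, namely the number of $q\in\cC$ lying on a fixed plane through $\overline{p_1p}$; by the same projection-from-a-line count this should again be $d=5$, giving $5a + 2b = 5$. Combined with $a=1$ this forces $b=0$, which is clearly wrong, so the subtlety — and the main obstacle — is exactly the correct bookkeeping of $b$: the pullback $\varphi_{_1}^*L$ must acquire a contribution along $S_X$ (where $\varphi_{_1}$ is undefined), and one must account for how the closure of $\varphi_{_1}^{-1}(\text{hyperplane})$ meets the boundary divisor $B^{[3]}$.

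The cleaner route, which I would actually carry out, is to identify $\varphi_{_1}^*L$ geometrically: a hyperplane in $\PP^{3*}$ is the set of planes in $\PP^3$ passing through a fixed general point $x\in\PP^3$, so $\varphi_{_1}^*L$ is (the closure of) the locus of non-collinear $Z\in X^{[3]}$ whose spanning plane contains $x$. I would recognize this divisor directly in terms of $H^{[3]}$ and $B^{[3]}$ by a Grassmannian/incidence argument, or equivalently test against one curve that does interact with $S_X$ transversally so that the $B^{[3]}$-coefficient is detected. Concretely, I expect the answer $\varphi_{_1}^*L = H^{[3]} - \tfrac{1}{2}B^{[3]}$, meaning the discrepancy in the $\Phi$/$\Psi$ computation is resolved once one correctly computes $\Psi_{[3]}\!\cdot\!\varphi_{_1}^*L = 5-1 = 4$ rather than $5$: the fixed point $p$ being a general point of $\cC$ creates exactly the configurations where the moving scheme degenerates toward $S_X$, contributing the $-\tfrac12 B^{[3]}$ term. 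The main work is therefore a careful local analysis of $\Psi_{[3]}$ near its intersection with $S_X$ to extract the correct $B^{[3]}$-intersection, after which solving the two linear equations $5a=5$ and $5a+2b=4$ gives $a=1$, $b=-\tfrac12$, as claimed.
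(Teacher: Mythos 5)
Your overall strategy (undetermined coefficients tested against curves) is the same as the paper's, and your computation of the $H^{[3]}$-coefficient via $\Phi_{[3]}$ is exactly the paper's argument: projecting $\mathcal{C}$ from the line $\overline{q_1q_2}$, which misses $\mathcal{C}$, has degree $5$, so $\alpha=1$. The difference is in how the $B^{[3]}$-coefficient is pinned down. The paper intersects with $\Gamma_{[3]}$, the curve contracted by the Hilbert--Chow morphism ($\Gamma_{[3]}\cdot H^{[3]}=0$, $\Gamma_{[3]}\cdot B^{[3]}=-2$), so a single clean count --- exactly one tangent vector at $p_{_1}$ spans with $p_{_2}$ a plane through the general point $w$, namely $\mathbb{T}_{p_{_1}}X\cap\langle p_{_1},p_{_2},w\rangle$ --- immediately gives $\beta=1$ with no interference from the $H^{[3]}$ term. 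That is the cleaner choice of second test curve.

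Your proposal, by contrast, does not actually establish the second equation. You first compute $\Psi_{[3]}\cdot\varphi_{_1}^*L=5$, observe this contradicts the expected answer, then assert without proof that the correct value is $4$ and attribute the discrepancy to $\Psi_{[3]}$ degenerating toward $S_X$, deferring the justification to an unperformed ``careful local analysis.'' That diagnosis is wrong: for general choices $\Psi_{[3]}$ is disjoint from $S_X$, since at $q=p$ the limit scheme is $p_{_1}$ together with the length-$2$ scheme at $p$ in the tangent direction of $\mathcal{C}$, which is non-collinear because $p_{_1}$ does not lie on that tangent line. The actual reason the count is $4$ and not $5$ is elementary: the map $\mathcal{C}\to\PP^1$ induced by $\varphi_{_1}$ on $\Psi_{[3]}$ is projection of $\mathcal{C}$ from the line $\overline{p_{_1}p}$, and since that line meets the plane of $\mathcal{C}$ precisely at $p\in\mathcal{C}$, this is projection of the degree-$5$ plane curve $\mathcal{C}$ from one of its own points, which has degree $5-1=4$. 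With that fix, $5\alpha-\beta=4$ gives $\beta=1$ and your route closes; as written, however, the key intersection number is guessed rather than proved, and the proposed mechanism for it would fail.
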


\begin{proof}
	
	Note that $L$ is represented by the class of all hyperplanes in $\PP^3$ containing a fixed general point $w\in\PP^3$. Write $$\varphi_{_1}^*L=\alpha H^{[3]}-\frac{\beta}{2}B^{[3]}$$ for some real numbers $\alpha$ and $\beta$.
	
	As in Section \ref{subsec:SomeCurves}, let $\Gamma_{[3]}$ be the curve on $X^{[3]}$ obtained by fixing $2$ distinct general points $p_{_1}$ and $p_{_2}$ on $X$ and ``spinning'' a tangent vector to $X$ at $p_{_1}$. Note that the cardinality of the set $$\{Z\in\Gamma_{[3]}\,:\,\mbox{the plane spanned by }Z\mbox{ contains }w\}$$ is $1$, with its sole member being the scheme $Z$ consisting of $p_{_1}$, $p_{_2}$ and the tangent vector to $X$ at $p_{_1}$ given by the intersection of the tangent plane to $X$ at $p_{_1}$ and the hyperplane in $\PP^3$ spanned by $p_{_1},p_{_2}$ and $w$. It follows that $\Gamma_{[3]}\cdot\varphi_{_1}^*L=1$, whence $\beta=1$, since $\Gamma_{[3]}\cdot H^{[3]}=0$ and $\Gamma_{[3]}\cdot B^{[3]}=-2$.
	
	Next, let $\Phi_{[3]}$ be the curve on $X^{[3]}$ obtained by fixing two points $q_{_1}$ and $q_{_2}$ of $X$, and letting a third point $p$ move along a general hyperplane section $\mathcal{C}$ of $X$. Then \begin{flalign*}
		\Phi_{[3]}\cdot\varphi_{_1}^*L&=\#\{Z\in\Phi_{[3]}\,:\,\mbox{the plane spanned by }Z\mbox{ contains }w\}\\[2pt] &=\#\{Z\in\Phi_{[3]}\,:\,\mbox{the plane spanned by }q_{_1}\,,q_{_2}\mbox{ and }w\mbox{ contains the moving point }p\}\\[2pt] &=\mbox{degree of the projective plane curve }\mathcal{C}\\[2pt] &=5.
	\end{flalign*} Since $\Phi_{[3]}\cdot H^{[3]}=5$ and $\Phi_{[3]}\cdot B^{[3]}=0$, we see that $\alpha=1$.
	
\end{proof}

This immediately leads to the following observation.

\begin{lemma}\label{lemma:NefnessOutsideS_X}
	For any real number $a\geq1$, the class $aH^{[3]}-\frac{1}{2}B^{[3]}\in N^1\!\left(X^{[3]}\right)$ non-negatively intersects any irreducible curve on $X^{[3]}$ that is not contained in $S_X$\,.
\end{lemma}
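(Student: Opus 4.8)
The plan is to use Lemma \ref{lemma:PullbackOfLfromP3*} to split the class into two pieces, each of which is easy to control on curves not contained in $S_X$. By that lemma, $\varphi_{_1}^*L=H^{[3]}-\frac{1}{2}B^{[3]}$, so for any $a\geq1$ I can write
$$aH^{[3]}-\frac{1}{2}B^{[3]}=(a-1)H^{[3]}+\left(H^{[3]}-\frac{1}{2}B^{[3]}\right)=(a-1)H^{[3]}+\varphi_{_1}^*L.$$
Since $a\geq1$ and $H^{[3]}$ is nef (being the pullback of an ample class under the Hilbert-Chow morphism), the summand $(a-1)H^{[3]}$ intersects \emph{every} irreducible curve non-negatively. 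Thus it suffices to show that $\varphi_{_1}^*L$ meets non-negatively any irreducible curve $\Gamma$ with $\Gamma\not\subset S_X$.

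To handle $\varphi_{_1}^*L$, first I would produce an effective representative of its class. The class $L$ on $\PP^{3*}$ is represented by the hyperplanes through a fixed general point $w\in\PP^3$, so on the open set $X^{[3]}\setminus S_X$, where $\varphi_{_1}$ is an honest morphism, the pullback $\varphi_{_1}^*L$ is represented by the effective divisor $D_w=\{Z\in X^{[3]}\setminus S_X:\ w\in\mbox{span}(Z)\}$. Because $S_X$ has codimension $2$ in $X^{[3]}$, the restriction isomorphism $\mbox{Pic}(X^{[3]})\xrightarrow{\sim}\mbox{Pic}(X^{[3]}\setminus S_X)$ identifies the class $\varphi_{_1}^*L$ with the class of the closure $\overline{D_w}$ in $X^{[3]}$.

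Next I would choose $w$ so that $\overline{D_w}$ does not contain $\Gamma$. Since $\Gamma\not\subset S_X$, I can pick a general point $Z_{0}$ of $\Gamma$ lying in $X^{[3]}\setminus S_X$; its span is a fixed plane, a proper closed subset of $\PP^3$. Taking $w$ outside this plane forces $Z_{0}\notin D_w$, hence $\Gamma\not\subset\overline{D_w}$. For such a general $w$, the pullback of the effective divisor $\overline{D_w}$ to the normalization of $\Gamma$ is an effective divisor, so $\Gamma\cdot\varphi_{_1}^*L=\Gamma\cdot\overline{D_w}\geq0$. Combining this with the non-negative contribution of $(a-1)H^{[3]}$ yields the claim.

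The hard part here is not a computation but a point of care: because $\varphi_{_1}$ is only a rational map with indeterminacy along $S_X$, the class $\varphi_{_1}^*L$ is defined a priori only through the restriction isomorphism on $X^{[3]}\setminus S_X$. The crux is therefore to verify that $\overline{D_w}$ genuinely represents $\varphi_{_1}^*L$ on all of $X^{[3]}$ and that a general $w$ makes $\overline{D_w}$ avoid the generic point of $\Gamma$; once the hypothesis $\Gamma\not\subset S_X$ is used to place a generic point of $\Gamma$ in the locus where $\varphi_{_1}$ is defined, the rest is formal.
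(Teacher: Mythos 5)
Your proof is correct and follows essentially the same route as the paper: decompose the class as $(a-1)H^{[3]}+\varphi_{_1}^*L$, use nefness of $H^{[3]}$ for the first summand, and use Lemma \ref{lemma:PullbackOfLfromP3*} together with ampleness of $L$ for the second. The paper simply asserts that ampleness of $L$ makes $\varphi_{_1}^*L$ non-negative on curves not contained in the indeterminacy locus $S_X$; your explicit argument via the effective representative $\overline{D_w}$ for a general $w$ is a correct filling-in of that step.
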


\begin{proof}
	Since $L$ is ample on $\PP^{3*}$, it follows from Lemma \ref{lemma:PullbackOfLfromP3*} that $H^{[3]}-\frac{1}{2}B^{[3]}$ is non-negative on any irreducible curve in $X^{[3]}$ that is not contained in $S_X$\,. Since $H^{[3]}$ is nef, we see that $aH^{[3]}-\frac{1}{2}B^{[3]}$ is also non-negative on any irreducible curve in $X^{[3]}$ that is not contained in $S_X$, for any real $a\geq1$.
\end{proof}

We now have all the pieces we need to complete the proof of Theorem \ref{theorem:Nef-X3}.

\begin{proof}[Proof of Theorem \ref{theorem:Nef-X3}]
	From Lemma \ref{lemma:NefnessOnS_X} above, it follows that $$i^*\!\left(\frac{7}{5}H^{[3]}-\frac{1}{2}B^{[3]}\right)=\left(\psi^{-1}\right)^*\!\left(\frac{3}{5}H^{[2]}\right)$$ is a nef class on $S_X$\,.
	
	Noting that for any curve $C$ contained in $S_X$, and any divisor $D$ on $X^{[3]}$, we have $$C\cdot D=i_*(C)\cdot D=C\cdot i^*D,$$ it follows that $$\frac{7}{5}H^{[3]}-\frac{1}{2}B^{[3]}$$ non-negatively intersects any curve contained in $S_X$\,.
	
	Finally, by Lemma \ref{lemma:NefnessOutsideS_X}, we know that $$\frac{7}{5}H^{[3]}-\frac{1}{2}B^{[3]}$$ also non-negatively intersects any other curve on $X^{[3]}$.
	
	Hence, $$\frac{7}{5}H^{[3]}-\frac{1}{2}B^{[3]}$$ is nef on all of $X^{[3]}$. By Theorem \ref{theorem:Nef-X(d-2)}, we know that the nef cone of $X^{[3]}$ cannot extend beyond the ray spanned by this class, whence it must span an extremal ray of the nef cone.
\end{proof}

\begin{remark}\label{remark:Nef-X3}
	We remark here that Theorem \ref{theorem:Nef-X3} shows that in the case of an $NL$-general quintic hypersurface $X$ in $\PP^3$, the nef cone of $X^{[3]}$ indeed extends beyond the ray spanned by $$\frac{8}{5}H^{[3]}-\frac{1}{2}B^{[3]}\,,$$ which is the ray one would get simply by substituting $d=5$ and $n=3$ in the formula (\ref{eq:BHL+Formula}) from Proposition 4.5 of \cite{Bolognese et al.}.
\end{remark}

\subsection{The nef cone of $X^{[4]}$ for a sextic}\label{subsec:Nef-X4}

Let $X$ be an $NL$-general sextic hypersurface in $\PP^3$, and let $$\varphi_{_2}:X^{[4]}\dashrightarrow \mathbb{G}:=\mathbb{G}(5,9)$$ be the rational map taking any non-collinear $Z\in X^{[4]}$ to the locus of quadratic forms on $\PP^3$ vanishing on $Z$.

As before, we have an isomorphism $\mbox{Pic}(X^{[4]})\xrightarrow{\sim}\mbox{Pic}(X^{[4]}\backslash S_X)$ given by restriction. So it makes sense to pullback a divisor (and numerical) class from $\mathbb{G}$ to $X^{[4]}$ via $\varphi_{_2}$.

In this subsection, we prove the following assertion. \begin{theorem}\label{theorem:Nef-X4} Let $X$ be an $NL$-general sextic hypersurface in $\PP^3$. Then the nef cone of the Hilbert scheme $X^{[4]}$ is spanned by $H^{[4]}$ and $$2H^{[4]}-\frac{1}{2}B^{[4]}\,.$$
\end{theorem}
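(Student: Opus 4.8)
The theorem for X^[4] on a sextic should parallel Theorem for X^[3] on a quintic. Let me map out the strategy.The plan is to mirror the structure of the proof of Theorem \ref{theorem:Nef-X3} for quintics, using the rational map $\varphi_{_2}:X^{[4]}\dashrightarrow\mathbb{G}(5,9)$ in place of $\varphi_{_1}$. Since $S_X$ has codimension $2$ in $X^{[4]}$ (as $S_X\cong X^{[2]}$ is $4$-dimensional and $X^{[4]}$ is $8$-dimensional), restriction gives an isomorphism on Picard groups, so pullbacks of classes from $\mathbb{G}$ to $X^{[4]}$ via $\varphi_{_2}$ are well-defined. The goal is to show that $2H^{[4]}-\frac{1}{2}B^{[4]}$ is nef on all of $X^{[4]}$; by Theorem \ref{theorem:Nef-X(d-2)} (with $d=6$, giving the bound $\frac{d^2-d-6}{2d}=\frac{24}{12}=2$), this class lies on the boundary of the nef cone, so proving its nefness shows it spans the extremal ray.

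First I would compute the pullback $\varphi_{_2}^*\sigma$ of the Schubert (Pl\"ucker/special) divisor class $\sigma$ generating $\Pic(\mathbb{G})$, writing $\varphi_{_2}^*\sigma=\alpha H^{[4]}-\frac{\beta}{2}B^{[4]}$ and pinning down $\alpha,\beta$ by intersecting with test curves. I would intersect with $\Gamma_{[4]}$ (the Hilbert-Chow-contracted curve, with $\Gamma_{[4]}\!\cdot\!H^{[4]}=0$, $\Gamma_{[4]}\!\cdot\!B^{[4]}=-2$) to get $\beta$, and with $\Phi_{[4]}$ (with $\Phi_{[4]}\!\cdot\!H^{[4]}=6$, $\Phi_{[4]}\!\cdot\!B^{[4]}=0$) to get $\alpha$. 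The relevant Schubert divisor is the locus in $\mathbb{G}(5,9)$ of $5$-planes (spaces of quadrics through $Z$) meeting a fixed subspace nontrivially; geometrically this translates into counting members $Z$ of the test curve for which the quadrics vanishing on $Z$ satisfy an extra linear condition, e.g. all pass through a fixed general point of $\PP^3$. I expect the computation to yield a class proportional to $H^{[4]}-\frac{1}{2}B^{[4]}$ (matching the $d=5$ pattern, where the analogous pullback was exactly $H^{[3]}-\frac{1}{2}B^{[3]}$), so that the analogue of Lemma \ref{lemma:NefnessOutsideS_X} gives nonnegativity of $aH^{[4]}-\frac{1}{2}B^{[4]}$ on every irreducible curve not contained in $S_X$, for all $a\geq 1$, and in particular for $a=2$.

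Next, for curves contained in $S_X$, I would invoke Lemma \ref{lemma:NefnessOnS_X} directly: with $d=6$ it gives that $i^*\!\left(2H^{[4]}-\frac{1}{2}B^{[4]}\right)=\left(\psi^{-1}\right)^*\!\left(\frac{d^2-5d+6}{2d}H^{[2]}\right)=\left(\psi^{-1}\right)^*\!\left(\frac{1}{2}H^{[2]}\right)$ is nef on $S_X$. Combined with the identity $C\cdot D=C\cdot i^*D$ for curves $C\subset S_X$, this shows $2H^{[4]}-\frac{1}{2}B^{[4]}$ is nonnegative on all curves inside $S_X$ as well. Since every irreducible curve either lies in $S_X$ or does not, the two cases together establish nefness on all of $X^{[4]}$, completing the proof.

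The main obstacle I anticipate is the pullback computation via $\varphi_{_2}$, which is more delicate here than in the quintic case. For a quintic, a non-collinear length-$3$ scheme spans a unique hyperplane, so $\varphi_{_1}$ lands in $\PP^{3*}$ and the Schubert geometry is transparent. For a sextic, I must verify that a general non-collinear length-$4$ scheme imposes independent conditions on quadrics (so that the fiber is genuinely a $5$-dimensional space of quadrics and $\varphi_{_2}$ is well-defined as a morphism on $X^{[4]}\setminus S_X$), identify precisely which Schubert class generates $\Pic(\mathbb{G})$ and give it a clean enumerative interpretation, and then carefully carry out the two curve counts for $\Gamma_{[4]}$ and $\Phi_{[4]}$ — the latter requiring a genuine intersection-theoretic count of how the extra linear condition on the quadrics cuts the moving point's hyperplane section. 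Checking that the indeterminacy locus of $\varphi_{_2}$ is exactly $S_X$ (no other non-collinear schemes fail to impose independent conditions on quadrics, for $X$ very general) is the step where I would need to be most careful.
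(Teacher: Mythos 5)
Your proposal follows essentially the same route as the paper's proof: pull back the Schubert divisor $\sigma_1$ on $\mathbb{G}(5,9)$ via $\varphi_{_2}$ to handle curves not contained in $S_X$, use Lemma \ref{lemma:NefnessOnS_X} with $d=6$ (via residuation) for curves inside $S_X$, and invoke Theorem \ref{theorem:Nef-X(d-2)} for extremality. One misprediction worth flagging: the pullback is \emph{not} proportional to $H^{[4]}-\frac{1}{2}B^{[4]}$; the paper computes $\varphi_{_2}^*\sigma_1=2H^{[4]}-\frac{1}{2}B^{[4]}$ (the pattern is $kH-\frac{1}{2}B$ for degree-$k$ forms, and here $k=2$), so your ``nonnegativity for all $a\geq1$'' should read $a\geq2$ — which still covers the needed case $a=2$, and in fact the computed class is exactly the one whose nefness is required. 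Two further minor points: $S_X$ has codimension $4$ (not $2$) in $X^{[4]}$, harmless since only codimension $\geq2$ is needed for the Picard restriction isomorphism; and the paper determines the coefficients using the test curves $\Phi_{[4]}$ and $\Psi_{[4]}$ (with counts $12$ and $11$) rather than $\Gamma_{[4]}$ and $\Phi_{[4]}$, which avoids an enumerative count involving a spinning tangent vector.
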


The plan for proving this is essentially the same as for the quintic case discussed in Section \ref{subsec:Nef-X3} above.

\begin{lemma}\label{lemma:PullbackOfHfromGr}
	Let $\sigma_1$ be the Schubert class of all planes in $\mathbb{G}$ that meet a fixed general $3$-dimensional plane $\Lambda\subseteq\PP^9$. Then $$\varphi_{_2}^*\sigma_1=2H^{[4]}-\frac{1}{2}B^{[4]}\,.$$ Consequently, the class $$2H^{[4]}-\frac{1}{2}B^{[4]}\in N^1\!\left(X^{[4]}\right)$$ non-negatively intersects any irreducible curve on $X^{[4]}$ that is not contained in $S_X$\,.
\end{lemma}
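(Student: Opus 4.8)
The plan is to proceed exactly as in the proof of Lemma \ref{lemma:PullbackOfLfromP3*}: write $\varphi_{_2}^*\sigma_1 = \alpha H^{[4]} - \frac{\beta}{2}B^{[4]}$ for real numbers $\alpha,\beta$, and pin these down by intersecting against the two test curves $\Gamma_{[4]}$ and $\Phi_{[4]}$ from Section \ref{subsec:SomeCurves}. The key translation is the following. Writing $W = H^0(\mathcal{O}_{\PP^3}(2))$ (a $10$-dimensional space) and identifying the general $3$-plane $\Lambda \subseteq \PP^9 = \PP(W)$ with a general $4$-dimensional subspace $L \subseteq W$, the $5$-plane $\varphi_{_2}(Z)$ meets $\Lambda$ if and only if some nonzero quadric in $L$ vanishes on $Z$; equivalently, the fixed $\PP^3 = \PP(L)$ of quadrics in $\PP^3$ contains a quadric through $Z$. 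Throughout I use that a general length-$4$ subscheme occurring on either test curve imposes $4$ independent conditions on quadrics, so that $\varphi_{_2}$ is defined there and sends such a $Z$ to a genuine $5$-plane.

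To compute $\beta$, I intersect with $\Gamma_{[4]}$ (fix general $p_{_1},p_{_2},p_{_3}\in X$ and spin a tangent vector to $X$ at $p_{_1}$). Since $\Gamma_{[4]}\cdot H^{[4]}=0$ and $\Gamma_{[4]}\cdot B^{[4]}=-2$, we get $\Gamma_{[4]}\cdot\varphi_{_2}^*\sigma_1=\beta$, so it suffices to count the spinning directions $v$ for which $\PP(L)$ contains a quadric through the length-$4$ scheme $Z_v$. For general $L$, the three conditions of passing through $p_{_1},p_{_2},p_{_3}$ cut $\PP(L)\cong\PP^3$ down to a single quadric $q_{_0}$; the remaining tangency condition is $v\in T_{p_{_1}}\{q_{_0}=0\}\cap T_{p_{_1}}X$, the intersection of two $2$-planes inside the $3$-dimensional space $T_{p_{_1}}\PP^3$, hence is met by exactly one direction $v$. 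Thus $\beta=1$.

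To compute $\alpha$, I intersect with $\Phi_{[4]}$ (fix general $q_{_1},q_{_2},q_{_3}\in X$ and let a fourth point $p$ move along a general hyperplane section $\mathcal{C}=X\cap\Pi$). Since $\Phi_{[4]}\cdot H^{[4]}=6$ and $\Phi_{[4]}\cdot B^{[4]}=0$, we get $\Phi_{[4]}\cdot\varphi_{_2}^*\sigma_1=6\alpha$, and I count the $p\in\mathcal{C}$ for which $\PP(L)$ contains a quadric through $\{q_{_1},q_{_2},q_{_3},p\}$. Again the unique $q_{_0}\in\PP(L)$ through $q_{_1},q_{_2},q_{_3}$ reduces this to counting $p\in\{q_{_0}=0\}\cap\mathcal{C}$; as $\{q_{_0}=0\}$ cuts the plane $\Pi$ in a conic and $\mathcal{C}$ is a plane sextic, B\'{e}zout gives $2\cdot 6=12$ such points, so $6\alpha=12$ and $\alpha=2$. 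This yields $\varphi_{_2}^*\sigma_1=2H^{[4]}-\frac{1}{2}B^{[4]}$. For the final assertion, $\sigma_1$ is the Pl\"{u}cker hyperplane class and hence ample on $\mathbb{G}$; representing $\varphi_{_2}^*\sigma_1$ by the effective divisor $\overline{\varphi_{_2}^{-1}(D_\Lambda)}$ for a general $D_\Lambda\in|\sigma_1|$, any irreducible curve $C\not\subseteq S_X$ fails to be a component of this divisor for general $\Lambda$, so $C\cdot\left(2H^{[4]}-\frac{1}{2}B^{[4]}\right)$ is a nonnegative count of points.

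The main obstacle I anticipate is not any single computation but the transversality and general-position bookkeeping underlying the two point counts: one must verify that for general $L$, $\Lambda$, and base points, the length-$4$ schemes involved impose independent conditions on quadrics (so that $\varphi_{_2}$ is defined and the counts have the expected dimension), that $q_{_0}$ is smooth at the relevant points (so the tangency computation for $\beta$ really is the clean intersection of two planes), and that the $12$ points computed for $\alpha$ are reduced. These are the analogues of the genericity hypotheses used silently in Lemma \ref{lemma:PullbackOfLfromP3*}, and I expect them to follow from standard dimension counts together with the fact that quadrics in $\PP^3$ separate the points and tangent directions of a general configuration.
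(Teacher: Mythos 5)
Your proof is correct and follows essentially the same strategy as the paper's: write $\varphi_{_2}^*\sigma_1$ in the basis $H^{[4]},B^{[4]}$ and determine the coefficients by intersecting with test curves, with the computation of $\alpha=2$ via $\Phi_{[4]}$, the unique quadric of $\Lambda$ through the three fixed points, and B\'{e}zout being identical to the paper's. The only difference is that the paper pins down $\beta$ by computing $\Psi_{[4]}\cdot\varphi_{_2}^*\sigma_1=11$ on the curve $\Psi_{[4]}$ rather than by your direct tangency count on $\Gamma_{[4]}$; both routes give $\beta=1$.
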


\begin{proof}
	
	We start by writing $$\varphi_{_2}^*\sigma_1=\alpha H^{[4]}-\frac{\beta}{2}B^{[4]}$$ for some real numbers $\alpha$ and $\beta$.
	
	Let $\Phi_{[4]}$ be the curve on $X^{[4]}$ obtained by fixing three points $q_{_1},q_{_2},q_{_3}$ of $X$, and letting a fourth point move along a general hyperplane section $\mathcal{C}:=\mathcal{H}\cap X$ of $X$ for some hyperplane $\mathcal{H}\subseteq\PP^3$.
	
	Counting the number of independent conditions imposed on quadrics, note that there is only one quadric hypersurface $Q\subseteq\PP^3$ that contains each of $q_{_1}\,,q_{_2}$ and $q_{_3}$ and also belongs to $\Lambda$\,. Then, since $\mathcal{C}$ is a degree $6$ curve in $\mathcal{H}$, and $\mathcal{H}\cap Q$ is a degree $2$ curve in $\mathcal{H}$, it follows that $\Phi_{[4]}\cdot\varphi_{_2}^*\sigma_1=12$. Hence, $\alpha=2$.
	
	Finally, let $\Psi_{[4]}$ be the curve on $X^{[4]}$ obtained by fixing two points of $X$, and also fixing a general point of a general hyperplane section $\mathcal{C}'$ of $X$, and letting a fourth point move along $\mathcal{C}'$. Arguing as above, one sees that $\Psi_{[4]}\cdot\varphi_{_2}^*\sigma_1=11$, whence $\beta=1$.
	
	Therefore, $$\varphi_{_2}^*\sigma_1=2H^{[4]}-\frac{1}{2}B^{[4]}\,.$$
	
	Since $\sigma_1$ is ample on $\mathbb{G}$, the lemma follows.
	
\end{proof}

\begin{proof}[Proof of Theorem \ref{theorem:Nef-X4}]
	From Lemma \ref{lemma:NefnessOnS_X}, we see that $$i^*\!\left(2H^{[4]}-\frac{1}{2}B^{[4]}\right)=\left(\psi^{-1}\right)^*\!\left(H^{[2]}\right)$$ is nef on $S_X$. By the projection formula, it follows that $$2H^{[4]}-\frac{1}{2}B^{[4]}$$ non-negatively intersects any curve contained in $S_X$\,. Lemma \ref{lemma:PullbackOfHfromGr} now shows that the above class is nef on all of $X^{[4]}$. By virtue of Theorem \ref{theorem:Nef-X(d-2)}, this class must span an extremal ray of the nef cone of $X^{[4]}$.
\end{proof}

\begin{remark}\label{remark:Nef-X4}
	Theorem \ref{theorem:Nef-X4} provides us with another instance where simple substitution ($d=6$, $n=4$) in the formula (\ref{eq:BHL+Formula}) from Proposition 4.5 of \cite{Bolognese et al.} gives an ample class on $X^{[n]}$, and not an edge of the nef cone.
\end{remark}

\begin{remark}
	We should also remark why the same method does not directly apply for hypersurfaces of degree $d\geq7$. To replicate the method, we would look at the rational map $$\varphi_{_k}:X^{[d-2]}\dashrightarrow \Gr\!\left(h^0\!\left(\mathcal{O}_{\PP^3}(k)\right)-(d-2),H^0\!\left(\mathcal{O}_{\PP^3}(k)\right)\right)$$ taking $Z\in X^{[d-2]}$ to the locus of degree $k$ forms vanishing on $Z$, for some positive integer $k$, and would require that its indeterminacy locus be contained in the locus $S_X$ of collinear schemes. Now, an element $Z$ of $X^{[d-2]}$ would impose $d-2$ independent conditions on the space of all degree $k$ forms on $\PP^3$ only if it does not have a collinear subscheme of length at least $k+2$. Since we want any $Z\in X^{[d-2]}\backslash S_X$ to satisfy this requirement, we need to have $(d-2)-1\leq k+1$, i.e., $d-4\leq k$. Moreover, using arguments similar to those given in Lemmas \ref{lemma:PullbackOfLfromP3*}, \ref{lemma:NefnessOutsideS_X} and \ref{lemma:PullbackOfHfromGr}, one can show (see \cite[Proposition 3.1.(1)]{ABCH}) that in this situation the class $$aH^{[d-2]}-\frac{1}{2}B^{[d-2]}$$ non-negatively intersects any irreducible curve on $X^{[d-2]}$ that is not contained in $S_X$, for any real number $a\geq k$. Thus, in order to show that the class $$\frac{(d^2-d-6)}{2d}H^{[d-2]}-\frac{1}{2}B^{[d-2]}$$ is nef (and hence extremal nef, by Theorem \ref{theorem:Nef-X(d-2)}), we would need $$\frac{d^2-d-6}{2d}\geq k.$$ Putting these two inequalities for $k$ together, we must have $$d-4\leq\frac{d^2-d-6}{2d}\,,$$ whence $d$ must be at most $6$.
	
	An alternative approach could be to stratify $X^{[d-2]}$ into loci where various numbers of points are required to be collinear. But residuation does not work so easily in this method. Moreover, one would need to compute the N\'{e}ron-Severi groups of these loci.
\end{remark}

\section{The nef cone of $X^{[d-r]}$ for a degree $d$ hypersurface}\label{sec:Nef-X-(d-r)}

We begin this section with the observation that if a smooth point $p$ of a hypersurface $X$ is a (ordinary) surface $r$-flex of $X$, then the tangent plane $\mathbb{T}_pX$ to $X$ at $p$ must be the only hyperplane $\Lambda$ in $\PP^3$ that satisfies the condition of Definition \ref{def:SurfaceFlex} above (with respect to $p$).

First, we argue (see Proposition \ref{prop:ExistenceOfSurfaceFlexes}) that for a fixed integer $r\geq3$, once $d$ is large enough, there are $NL$-general degree $d$ hypersurfaces that have ordinary surface $r$-flexes. Then we find a lower bound on the nef cone of $X^{[d-r]}$, assuming that $X$ has an ordinary surface $r$-flex (see Theorem \ref{theorem:Nef-X(d-r)}).

\subsection{Existence of surfaces with $r$-flexes}

In the following lemma, we exhibit a degree $d$ hypersurface having an isolated surface $r$-flex. It will be useful in the proof of Proposition \ref{prop:ExistenceOfSurfaceFlexes} below.

\begin{lemma}\label{lemma:Isolated_r-flex_Example}
	Let $d\geq r\geq3$ be integers.  Let $[x:y:z:w]$ be homogeneous coordinates on $\PP^3$. Then the degree $d$ hypersurface $X_{d,r}$ given as the vanishing locus of $F_{d,r}:=(x^r+y^r)w^{d-r}-zw^{d-1}$ has an ordinary surface $r$-flex at $[0:0:0:1]$ but no other surface $r$-flexes in the affine chart $\{w\neq0\}$.
\end{lemma}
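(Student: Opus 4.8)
The plan is to pass to the affine chart $\{w\neq0\}$ by setting $w=1$, where $F_{d,r}$ becomes $x^r+y^r-z$, so that $X_{d,r}$ is cut out by $z=g(x,y)$ with $g(x,y)=x^r+y^r$. This exhibits the surface in the chart as the graph of $g$, hence a smooth surface $S\subset\A^3$ on which the projection $(x,y,z)\mapsto(x,y)$ restricts to an isomorphism $S\xrightarrow{\sim}\A^2$. Under this isomorphism, for any plane $\Lambda\subset\PP^3$ the section $S\cap\Lambda$ corresponds to a plane curve, and since multiplicity at a point is preserved by the local isomorphism, the multiplicity of $S\cap\Lambda$ at a point of $S$ equals the order of vanishing, at the image point, of the pullback of the defining equation of $\Lambda$. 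This reduces every surface $r$-flex question in the chart to a single Taylor-coefficient computation in two variables.

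First I would treat $p=[0:0:0:1]$, i.e.\ $(x,y,z)=(0,0,0)$. A direct gradient computation (using $r\geq3$, so all the $x$- and $y$-partials vanish at $p$) gives $\mathbb{T}_pX=\{z=0\}$, and the corresponding section pulls back to the plane curve $x^r+y^r=0$. Its local equation at the origin is exactly $x^r+y^r$, homogeneous of degree $r$, which factors over $\CC$ as $\prod_{k=1}^{r}(x-\zeta_k y)$ where $\zeta_1,\dots,\zeta_r$ are the $r$ distinct $r$-th roots of $-1$. Thus the origin is an ordinary singularity of multiplicity exactly $r$, so $p$ is an ordinary surface $r$-flex.

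Next I would rule out any other surface $r$-flex in the chart. Fix $q=(a,b,c)\in S$ with $(a,b)\neq(0,0)$, write a general plane through $q$ as $\alpha(x-a)+\beta(y-b)+\gamma(z-c)=0$, and substitute $x=a+s$, $y=b+t$, $z=g(a+s,b+t)$. Expanding $g(a+s,b+t)-g(a,b)$ by the binomial theorem, the degree-one part of the resulting equation in $(s,t)$ is $(\alpha+\gamma r a^{r-1})s+(\beta+\gamma r b^{r-1})t$; for the section to be singular at $q$ this must vanish, which forces $\gamma\neq0$ (otherwise $\alpha=\beta=0$ too), and after normalizing $\gamma=1$ this says precisely $\Lambda=\mathbb{T}_qX$. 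With $\gamma=1$ the degree-two part is $\binom{r}{2}\bigl(a^{r-2}s^2+b^{r-2}t^2\bigr)$. Because $r\geq3$ we have $2<r$, so a section of multiplicity $\geq r$ must have this degree-two part vanish identically; since $r-2\geq1$ and $\binom{r}{2}\neq0$, this gives $a^{r-2}=b^{r-2}=0$, hence $a=b=0$, contradicting $(a,b)\neq(0,0)$. Therefore $p$ is the only surface $r$-flex in $\{w\neq0\}$.

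There is no serious obstacle here: the content is entirely in getting the local reduction of the first paragraph right (that section multiplicity on $S$ equals planar multiplicity, and that singularity forces $\Lambda$ to be the tangent plane) and in the two short binomial expansions. The single key numerical input is the hypothesis $r\geq3$, which is exactly what makes the degree-two coefficient both \emph{present} (it lies below the target multiplicity $r$) and \emph{effective} (the exponent $r-2$ is positive, so its vanishing forces $a=b=0$).
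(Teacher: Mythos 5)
Your argument is correct. The paper actually omits the proof of this lemma (declaring it straightforward), and your computation --- passing to the chart $w=1$ where $X_{d,r}$ is the graph $z=x^r+y^r$, checking that the tangent-plane section at the origin is $x^r+y^r=0$ with $r$ distinct branches, and using the order-one and order-two Taylor coefficients to force $\gamma\neq0$ and then $a=b=0$ at any other candidate point --- is exactly the kind of direct verification the author had in mind, with the role of $r\geq3$ correctly isolated.
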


Next, we give an example of a smooth hypersurface with a surface flex, thus illustrating that having a surface flex does not force the surface to have any singularities.

\begin{lemma}\label{lemma:SmoothWith_r-flex_Example}
	Let $d\geq3$, and let $[x:y:z:w]$ be homogeneous coordinates on $\PP^3$. Then the degree $d$ hypersurface $Y_d$ given as the vanishing locus of $G_d:=x^d+y^d+z^d-zw^{d-1}$ is smooth and has a surface $r$-flex at $[0:0:0:1]$, for every $3\leq r\leq d$.
\end{lemma}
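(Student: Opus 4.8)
The plan is to verify the two assertions—smoothness of $Y_d$ and the existence of the surface flex—separately, both by direct computation. I would work in the natural affine chart $\{w\neq 0\}$ with coordinates $x,y,z$ (setting $w=1$), in which the point $p=[0:0:0:1]$ becomes the origin and $G_d$ restricts to $g(x,y,z)=x^d+y^d+z^d-z$.

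For smoothness I would apply the Jacobian criterion in $\PP^3$. The partial derivatives are $\partial_x G_d = d\,x^{d-1}$, $\partial_y G_d = d\,y^{d-1}$, $\partial_z G_d = d\,z^{d-1}-w^{d-1}$, and $\partial_w G_d = -(d-1)\,z\,w^{d-2}$. At a singular point all four must vanish simultaneously. Since $d\geq 3$, the first two force $x=y=0$, and $\partial_w G_d=0$ forces $z=0$ or $w=0$; in either branch the vanishing of $\partial_z G_d$ then forces the remaining variable to vanish as well (if $z=0$ then $-w^{d-1}=0$, and if $w=0$ then $d\,z^{d-1}=0$), leaving the forbidden point $[0:0:0:0]$. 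Hence $Y_d$ has no singular point and is smooth. The only thing to watch is the case split coming from $\partial_w G_d$, but both branches collapse immediately.

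For the surface flex I would first pin down the cutting hyperplane. The gradient of $G_d$ at $p=[0:0:0:1]$ is $(0,0,-1,0)$, so the tangent plane $\mathbb{T}_p Y_d$ is $\{z=0\}$. Restricting $G_d$ to this plane—i.e. substituting $z=0$—kills both terms involving $z$ and leaves $x^d+y^d$. Thus the hyperplane section $Y_d\cap\{z=0\}$ is, in the chart $w=1$, the plane curve $x^d+y^d=0$, whose lowest-degree part at the origin is $x^d+y^d$ itself; therefore $p$ is a singular point of this section of multiplicity exactly $d$.

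Since $d\geq r$ for every $3\leq r\leq d$, the multiplicity $d$ is in particular $\geq r$, so the hyperplane $\Lambda=\{z=0\}$ witnesses the condition of Definition \ref{def:SurfaceFlex}(1), and $p$ is a surface $r$-flex for each such $r$. There is no genuine obstacle in this lemma: the content lies entirely in choosing the tangent plane as the cutting hyperplane and observing that it annihilates every monomial of $G_d$ involving $z$, leaving a purely $(x,y)$ form of full degree $d$; the remaining verifications are routine polynomial computations.
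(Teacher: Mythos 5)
Your proof is correct and is precisely the direct verification the paper has in mind (the paper omits the proof as ``straightforward''): the Jacobian criterion rules out singular points, and the tangent plane $\{z=0\}$ cuts out the curve $x^d+y^d=0$, which has a point of multiplicity $d\geq r$ at $[0:0:0:1]$. No issues.
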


The proofs of the two lemmas above are straightforward, and therefore omitted.

\begin{proposition}\label{prop:ExistenceOfSurfaceFlexes}
	Let $r\geq3$ be any integer. Then for any integer $d\geq\binom{r+1}{2}-1$, there is an $NL$-general degree $d$ hypersurface $X$ in $\PP^3$ that has an ordinary surface $r$-flex.
\end{proposition}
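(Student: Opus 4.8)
The plan is to establish the existence of an $NL$-general degree $d$ hypersurface with an ordinary surface $r$-flex by a dimension count combined with the countability of the Noether-Lefschetz locus. The key point is that the property of having a surface $r$-flex at a specified point is an open condition on the relevant parameter space, and Lemma \ref{lemma:SmoothWith_r-flex_Example} already guarantees that this condition is nonempty among smooth hypersurfaces, while Lemma \ref{lemma:Isolated_r-flex_Example} shows the flex can be taken to be ordinary and isolated. Since $NL_d$ is a \emph{countable} union of proper closed subvarieties of $U_d$, to produce an $NL$-general example it suffices to show that the locus of smooth degree $d$ surfaces admitting an ordinary surface $r$-flex is not contained in $NL_d$; equivalently, that this locus has dimension strictly larger than any of the countably many components of $NL_d$, or more simply that it is irreducible (or has a component) of dimension exceeding $\dim U_d - 1$ is not what we want --- rather, we want it to sweep out a subvariety that cannot be swallowed by a countable union of proper closed subsets.

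First I would set up the incidence variety. Let $\mathcal{I}\subseteq U_d\times\PP^3$ be the locally closed locus of pairs $(X,p)$ such that $p$ is an ordinary surface $r$-flex of $X$, and let $\pi:\mathcal{I}\to U_d$ be the first projection. The fibers of the second projection $\mathcal{I}\to\PP^3$ are easy to analyze: fixing $p=[0:0:0:1]$ (we may do so by the $\PGL_4$-action) and the tangent plane $\Lambda=\{z=0\}$, imposing that the hyperplane section $X\cap\Lambda$ has an ordinary singularity of multiplicity exactly $r$ at $p$ amounts to the vanishing of the lowest-order Taylor coefficients of the restricted equation up to order $r-1$, together with an ordinariness (openness) condition on the degree-$r$ part. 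This is $\binom{r+1}{2}-1$ linear conditions on the coefficients of $F$ restricted to $\Lambda$ (the ordinary-multiplicity-$r$ condition at a point of a plane curve), so the fiber over $p$ is irreducible of codimension $\binom{r+1}{2}-1$ in the space of surfaces through $p$ with tangent plane $\Lambda$. The main computation is verifying this codimension count and checking that the generic member of the fiber is smooth away from being constrained at $p$ --- here Lemma \ref{lemma:Isolated_r-flex_Example} supplies a concrete smooth point of $\mathcal{I}$ so the fiber is nonempty and the generic surface is smooth.

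Adding back the $3$-dimensional choice of $p\in\PP^3$, I would conclude that $\dim\mathcal{I}=\dim U_d-\left(\binom{r+1}{2}-1\right)+3$, so the image $\pi(\mathcal{I})$ is a constructible subset of $U_d$ of codimension at most $\binom{r+1}{2}-1-3$. The hypothesis $d\geq\binom{r+1}{2}-1$ is precisely what is needed to guarantee (via the dimension of $U_d=\binom{d+3}{3}-1$ and standard bounds on the dimensions of Noether-Lefschetz components) that $\pi(\mathcal{I})$ is not contained in $NL_d$: concretely, each component of $NL_d$ has codimension at least $d-3$ in $U_d$ (see \cite{Carlson et al.}), and one checks that $\binom{r+1}{2}-1-3< d-3$ under the stated bound, so a very general member of $\pi(\mathcal{I})$ avoids the countably many Noether-Lefschetz components. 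Finally, I would use the irreducibility of $\mathcal{I}$ (inherited from the irreducibility of the fibers and of $\PP^3$) to ensure that a general --- hence $NL$-general --- surface in $\pi(\mathcal{I})$ genuinely carries an \emph{ordinary} surface $r$-flex, completing the proof.

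The hard part will be the codimension bookkeeping: pinning down exactly $\binom{r+1}{2}-1$ as the number of conditions for an ordinary multiplicity-$r$ singularity (and verifying the ordinariness is a nonempty open condition, not an extra constraint that raises the count), and then confronting this against the sharp lower bound $d-3$ on the codimension of Noether-Lefschetz components to see that the inequality $d\geq\binom{r+1}{2}-1$ is the correct threshold for the argument to go through.
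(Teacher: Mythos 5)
Your strategy is the same as the paper's (incidence correspondence, dimension count, the bound $\codim NL_d\geq d-3$, and countability over an uncountable field), but your dimension count is off by one, and the error lands exactly on the threshold the proposition asserts. Fibering over $p\in\PP^3$ alone, the condition that $p$ be a surface $r$-flex of a smooth $X$ is: $p\in X$ (one linear condition) together with the vanishing of the degree $2,\dots,r-1$ parts of the equation restricted to the tangent plane $\mathbb{T}_pX$ --- the degree $0$ and $1$ parts of the restriction vanish automatically once the plane is the tangent plane. That is $1+\sum_{i=2}^{r-1}(i+1)=\binom{r+1}{2}-2$ conditions, not $\binom{r+1}{2}-1$. (The cleaner bookkeeping, which is what the paper does, is to fiber over the $5$-dimensional universal hyperplane $\{(\Lambda,p):p\in\Lambda\}$, where the fiber is an honest linear space of codimension $\binom{r+1}{2}$; one then gets $\dim = \dim U_d-\binom{r+1}{2}+5$.) With the correct count the comparison against $\dim U_d-(d-3)$ is strict precisely when $d\geq\binom{r+1}{2}-1$. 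With your count, the inequality you need, $\binom{r+1}{2}-1-3<d-3$, becomes an \emph{equality} at $d=\binom{r+1}{2}-1$, so your assertion that it holds ``under the stated bound'' is false there and the argument does not prove the proposition at the boundary value of $d$.

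Two further points. First, you claim $\pi(\mathcal{I})$ has ``codimension at most'' the codimension of $\mathcal{I}$; but $\dim\pi(\mathcal{I})\leq\dim\mathcal{I}$ only gives the opposite bound. The lower bound on $\dim\pi(\mathcal{I})$ requires $\pi$ to be generically finite, and this is exactly what the \emph{isolatedness} of the flex in Lemma \ref{lemma:Isolated_r-flex_Example} is for --- you cite that lemma only for nonemptiness/ordinariness. Second, Lemma \ref{lemma:Isolated_r-flex_Example} does not supply a smooth surface ($X_{d,r}$ is singular, e.g.\ it contains the plane $w=0$ when $d>r$); it is Lemma \ref{lemma:SmoothWith_r-flex_Example} that shows the flex locus is not contained in the discriminant. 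Moreover the smooth example $Y_d$ has a surface $r$-flex of multiplicity $d$, hence not an \emph{ordinary} $r$-flex for $r<d$, so one genuinely needs the irreducibility of the full incidence variety (and the fact that the ordinary locus is dense open in it) to conclude that the ordinary-flex locus also escapes the discriminant; this step should be made explicit rather than attributed to a ``concrete smooth point of $\mathcal{I}$''.
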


\begin{proof}
	
	Choose any $d\geq r$. Recall that there is a projective space $\PP^{N-1}$ parametrizing degree $d$ hypersurfaces in $\PP^3$, where $N=\binom{d+3}{3}$. Let $\mathcal{U}:=\{\,(\Lambda,p)\in\PP^{3*}\times\PP^3\,:\,p\in\Lambda\,\}$ be the universal hyperplane in $\PP^{3*}\times\PP^3$. Consider the incidence correspondence $$\Sigma_{d,r}:=\left\{\,(X,\Lambda,p)\,:\,p\mbox{ is a point of }X\cap \Lambda\mbox{ of multiplicity at least }r\,\right\}\subset\PP^{N-1}\times\mathcal{U}.$$
	
	Let $\mbox{pr}_2:\Sigma_{d,r}\rightarrow\mathcal{U}$ be the second projection. Then $\mbox{pr}_2$ is surjective and each fiber of $\mbox{pr}_2$ is isomorphic to $\PP^{N-1-\binom{r+1}{2}}$. Thus, $\Sigma_{d,r}$ is irreducible of dimension $$N-1-\binom{r+1}{2}+\dim(\mathcal{U})=N+4-\binom{r+1}{2}.$$
	
	Let $\Sigma_{d,r}^{\mbox{\tiny ord}}\subset\Sigma_{d,r}$ be the open locus $$\Sigma_{d,r}^{\mbox{\tiny ord}}:=\left\{\,(X,\Lambda,p)\,:\,p\mbox{ is an ordinary singularity of }X\cap \Lambda\mbox{ of multiplicity exactly }r\,\right\}.$$ We know that it is non-empty, because Lemma \ref{lemma:Isolated_r-flex_Example} shows that $$(X_{d,r}\,,\mathbb{T}_{[0:0:0:1]}X_{d,r}\,,[0:0:0:1])\in\Sigma_{d,r}^{\mbox{\tiny ord}}\,.$$ Hence, $\Sigma_{d,r}^{\mbox{\tiny ord}}$ is irreducible of dimension equal to the dimension of $\Sigma_{d,r}$\,.
	
	Let $\mbox{pr}_1:\Sigma_{d,r}\rightarrow\PP^{N-1}$ be the first projection. Since $X_{d,r}$ has an isolated ordinary surface $r$-flex, it follows that $$\dim\!\left(\mbox{pr}_1\!\left(\Sigma_{d,r}^{\mbox{\tiny ord}}\right)\right)=\dim\!\left(\Sigma_{d,r}^{\mbox{\tiny ord}}\right)=N+4-\binom{r+1}{2}.$$ Of course, $\mbox{pr}_1\!\left(\Sigma_{d,r}^{\mbox{\tiny ord}}\right)$ is also irreducible.
	
	Next, let $D_d$ be the discriminant hypersurface in $\PP^{N-1}$. In other words, the complement $U_d$ of $D_d$ in $\PP^{N-1}$ is the locus of all the smooth degree $d$ hypersurfaces in $\PP^3$. By Lemma \ref{lemma:SmoothWith_r-flex_Example}, it follows that $\mbox{pr}_1\!\left(\Sigma_{d,r}\right)\cap D_d$ is a proper closed subset of the irreducible space $\mbox{pr}_1\!\left(\Sigma_{d,r}\right)$, whence its dimension is strictly less than the dimension of $\mbox{pr}_1\!\left(\Sigma_{d,r}\right)$. Since the dimension of $\mbox{pr}_1\!\left(\Sigma_{d,r}^{\mbox{\tiny ord}}\right)$ is the same as that of $\Sigma_{d,r}$\,, follows that $$\dim\!\left(\mbox{pr}_1\!\left(\Sigma_{d,r}^{\mbox{\tiny ord}}\right)\cap D_d\right)\leq\dim\!\left(\mbox{pr}_1\!\left(\Sigma_{d,r}\right)\cap D_d\right)<\dim\!\left(\mbox{pr}_1\!\left(\Sigma_{d,r}^{\mbox{\tiny ord}}\right)\right).$$
	
	Now, assume $d\geq\binom{r+1}{2}-1$. According to \cite[Theorem 7.5.7]{Carlson et al.}, every component of the Noether-Lefschetz locus $NL_d$ (see Section \ref{sec:Prelims} above) has codimension at least $d-3$ in $U_d$\,. Since the dimension of $U_d$ is $N-1$ and we have assumed $d\geq\binom{r+1}{2}-1$, we find $$\dim(U_d)-(d-3)=N-1-(d-3)<N+4-\binom{r+1}{2}=\dim\!\left(\mbox{pr}_1\!\left(\Sigma_{d,r}^{\mbox{\tiny ord}}\right)\right).$$
	
	Since the ground field $\CC$ is uncountable, $\mbox{pr}_1\!\left(\Sigma_{d,r}^{\mbox{\tiny ord}}\right)$ cannot be covered by countably many subvarieties of strictly smaller dimension. Thus, $$\mbox{pr}_1\!\left(\Sigma_{d,r}^{\mbox{\tiny ord}}\right)\backslash\!\left(D_d\cup NL_d\right)$$ is non-empty. In other words, there is some $NL$-general degree $d$ hypersurface in $\PP^3$ that has an ordinary surface $r$-flex.
\end{proof}

\subsection{Nef divisors when there is an $r$-flex}

By Proposition \ref{prop:ExistenceOfSurfaceFlexes}, the hypothesis of Theorem \ref{theorem:Nef-X(d-r)} is non-vacuous. By exhibiting an orthogonal curve, we compute a lower bound on the nef cone of $X^{[d-r]}$.

\begin{theorem}\label{theorem:Nef-X(d-r)}
	Let $X$ be an $NL$-general degree $d$ hypersurface in $\PP^3$ for some $d\geq5$, and let $r\geq3$. Suppose $X$ has an ordinary surface $r$-flex $p$. Then the nef cone of the Hilbert scheme $X^{[d-r]}$ is spanned by $H^{[d-r]}$ and $$\alpha H^{[d-r]}-\frac{1}{2}B^{[d-r]}$$ for some real number $$\alpha\geq\frac{d^2-d-r^2-r}{2d}\,.$$
\end{theorem}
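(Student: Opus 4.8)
The plan is to mimic the proof of Theorem~\ref{theorem:Nef-X(d-2)} (carried out through Lemma~\ref{lemma:i_of_psi_of_Gamma2-IntersectionNumbers}), replacing the nodal tangent-plane section there by one with an ordinary $r$-fold point. By the observation opening this section, the only hyperplane realizing the $r$-flex is $\Lambda:=\mathbb{T}_pX$, so I would set $C:=X\cap\Lambda$, a degree $d$ plane curve having an ordinary singularity of multiplicity exactly $r$ at $p$. Since $\Pic(X)=\ZZ\cdot H$, the curve $C$ is irreducible and reduced: a decomposition $C=C_1+C_2$ into nonzero effective divisors would force $[C_1]+[C_2]=[H]$ with both summands positive integer multiples of $H$, which is impossible. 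Granting that $p$ is the \emph{only} singular point of $C$, its geometric genus equals $\binom{d-1}{2}-\binom{r}{2}$.

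Next I would construct the orthogonal curve $\Gamma\subset X^{[d-r]}$. Let $\tilde C$ be the normalization of $C$ and let $f\colon\tilde C\to\PP^1$ be the projection from $p$, sending $q$ to the line $\overline{pq}\subseteq\Lambda$. A general line $\ell$ through $p$ meets $C$ at $p$ with multiplicity exactly $r$, so $f$ has degree $d-r$; its fibers form a family of length $d-r$ subschemes of $\tilde C$, and pushing forward along $\tilde C\to X$ and invoking properness of $X^{[d-r]}$ produces a morphism $\PP^1\to X^{[d-r]}$ whose image $\Gamma$ is the residual length $d-r$ scheme of $\ell\cap X$. Because $H^{[d-r]}$ is represented by the schemes meeting a fixed general hyperplane section $D$, and $D\cap C$ consists of $d$ points off $p$, each determining one line through $p$, I obtain $\Gamma\cdot H^{[d-r]}=d$.

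For the intersection with $B^{[d-r]}$, the key point is that the residual scheme $Z_\ell$ is non-reduced precisely when the fiber $f^{-1}(t)$ is non-reduced, i.e.\ at the ramification of $f$. Indeed, since $p$ is the only singularity of $C$, no two branches of $\tilde C$ lying away from $p$ are glued in $X$, while the $r$ branches over $p$ have distinct tangents and so at most one of them lies in any fiber of $f$; hence $f^{-1}(t)\to X$ is injective on points for every $t$. Therefore $\Gamma\cdot B^{[d-r]}$ equals the degree of the ramification divisor of $f$, which by Riemann--Hurwitz is
\[
2\!\left(\binom{d-1}{2}-\binom{r}{2}\right)-2+2(d-r)=d^2-d-r^2-r.
\]
Consequently $\Gamma\cdot\!\left(\tfrac{d^2-d-r^2-r}{2d}H^{[d-r]}-\tfrac12 B^{[d-r]}\right)=0$, and since $NL$-generality makes $H^{[d-r]}$ the opposite edge of the two-dimensional nef cone, any nef class $\alpha H^{[d-r]}-\tfrac12 B^{[d-r]}$ must satisfy $\alpha\,d\geq\tfrac12\!\left(d^2-d-r^2-r\right)$, which is the asserted bound.

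The hard part will be the genus input, namely justifying that $C=X\cap\mathbb{T}_pX$ has no singularity other than the ordinary $r$-fold point at $p$. An extra node $q$ would simultaneously lower the geometric genus and, since $\ell=\overline{pq}$ would then yield a non-reduced residual scheme invisible to the ramification of $f$, change the count to $d^2-d-r^2-r-\delta$ (with $\delta$ the number of extra nodes), thereby weakening the bound; so I must rule these out (a bitangency $\mathbb{T}_pX=\mathbb{T}_qX$ condition) or restrict to the generic such $X$ furnished by Proposition~\ref{prop:ExistenceOfSurfaceFlexes}. I would also treat the $r$ points of $\tilde C$ over $p$ carefully: each smooth branch is unramified under projection from $p$ unless it is inflectionary there, and in either case its contribution is already recorded correctly by the Riemann--Hurwitz count above. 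Finally one checks, via properness, that the limiting schemes over the special tangent lines to $C$ at $p$ remain in $X^{[d-r]}$, completing the construction of $\Gamma$ as a genuine curve.
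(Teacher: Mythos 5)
Your construction and computation are exactly the paper's: the paper takes the same curve $\Omega$ (spin a line through the ordinary $r$-fold point $p$ of $C=X\cap\mathbb{T}_pX$ and take the residual length $d-r$ scheme), gets $\Omega\cdot H^{[d-r]}=d$ the same way, and identifies $\Omega\cdot B^{[d-r]}$ with the degree of the ramification divisor of the degree-$(d-r)$ projection $\tilde{C}\to\PP^1$, evaluated by Riemann--Hurwitz using $g(\tilde C)=\binom{d-1}{2}-\binom{r}{2}$. Your preliminary remarks (irreducibility of $C$ from $\Pic(X)=\ZZ H$, the branch analysis at the $r$ points of $\tilde C$ over $p$) are correct and only make explicit what the paper leaves implicit.

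The one point you leave open --- that $C$ could have singularities away from $p$ --- is a genuine omission in the paper's write-up as well, and your fallback of restricting to a generic $X$ would prove a weaker statement than the one asserted. But no extra hypothesis is needed, because the two corrections you identify cancel exactly. If $q\neq p$ is a singular point of $C$ with delta invariant $\delta_q$, the geometric genus drops by $\delta_q$ and hence the ramification count drops by $2\delta_q$; on the other hand the flat family of residual schemes is not $f:\tilde C\to\PP^1$ but $\pi:\mathcal{Z}\to\PP^1$ with $\mathcal{Z}\cong\Bl_pC$ (blowing up separates the $r$ smooth branches at $p$ but leaves the other singularities intact), and since $\Omega\cdot B^{[d-r]}=-2\deg\bigl(\pi_*\mathcal{O}_{\mathcal{Z}}\bigr)$, the exact sequence $0\to\mathcal{O}_{\mathcal{Z}}\to\nu_*\mathcal{O}_{\tilde C}\to Q\to0$ with $\length(Q)=\sum_{q\neq p}\delta_q$ gives $\Omega\cdot B^{[d-r]}=\deg R+2\sum_{q\neq p}\delta_q$. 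Geometrically: all branches of $C$ at $q$ lie over the single point $[\overline{pq}]\in\PP^1$, so the residual scheme over that line is non-reduced for a reason invisible to the ramification of $f$, and this restores exactly what the genus drop removed. Substituting, $\Omega\cdot B^{[d-r]}=d^2-d-r^2-r$ holds for every $X$ as in the statement, and the rest of your argument goes through verbatim.
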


\begin{proof}
		
		Let $\Omega$ be the curve on $X^{[d-r]}$ given as follows\,: take the hyperplane section $C$ of $X$ cut out by the tangent plane to $X$ at $p$, ``spin'' a line through the multiplicity $r$ point $p$ of the hyperplane section $C$, and take the residual length $d-r$ scheme of intersection of the line with $X$.
		
		This construction gives us a surjective mapping $\tilde{C}\rightarrow\PP^1$ of degree $d-r$, where $\tilde{C}$ is the normalization of $C$. The geometric genus of the projective plane curve $C$ is $$\binom{d-1}{2}-\binom{r}{2}.$$ So by Riemann-Hurwitz, the degree of the ramification divisor of the above mapping is $d^2-d-r^2-r$. It follows that $\Omega\cdot B^{[d-r]}=d^2-d-r^2-r$.
		
		Furthermore, as $H^{[d-r]}$ is represented by the locus of schemes $Z\in X^{[d-r]}$ such that $Z$ meets a fixed general hyperplane section of $X$, it is clear that $\Omega\cdot H^{[d-r]}=d$.
		
		Thus, $$\Omega\!\cdot\!\left(\frac{(d^2-d-r^2-r)}{2d}H^{[d-r]}-\frac{1}{2}B^{[d-r]}\right)=0\,,$$ whence the nef cone of $X^{[d-r]}$ cannot extend past the ray spanned by $$\frac{(d^2-d-r^2-r)}{2d}H^{[d-r]}-\frac{1}{2}B^{[d-r]}.$$
\end{proof}

\section{The nef cone of $X^{[2]}$ for a quintic hypersurface}\label{sec:Nef-X2}

	Let $X$ be an $NL$-general quintic hypersurface in $\PP^3$. By Proposition \ref{prop:ExistenceOfSurfaceFlexes}, there exist $NL$-general quintic hypersurfaces having ordinary surface $3$-flexes, and in this section we compute the nef cone of $X^{[2]}$ for such $X$.

	If, on the other hand, $X$ does not have any surface $3$-flexes, then we find bounds on what the non-trivial \big(i.e., other than the one spanned by $H^{[2]}$\big)  extremal ray of the nef cone of $X^{[2]}$ can be, and observe an interesting phenomenon.

	\begin{theorem}\label{theorem:Nef-X2-withFlex}
		Let $X$ be an $NL$-general quintic hypersurface in $\PP^3$ that has an ordinary surface $3$-flex. Then the nef cone of the Hilbert scheme $X^{[2]}$ is spanned by $H^{[2]}$ and $$\frac{4}{5}H^{[2]}-\frac{1}{2}B^{[2]}\,.$$
	\end{theorem}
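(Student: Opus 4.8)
The plan is to combine the lower bound already supplied by Theorem \ref{theorem:Nef-X(d-r)} with a nefness statement obtained through residuation. Since $X$ is an $NL$-general quintic carrying an ordinary surface $3$-flex, Theorem \ref{theorem:Nef-X(d-r)} applies with $d=5$ and $r=3$, and because $\frac{d^2-d-r^2-r}{2d}=\frac{8}{10}=\frac45$, it tells us that the nef cone of $X^{[2]}$ is spanned by $H^{[2]}$ and $\alpha H^{[2]}-\frac12 B^{[2]}$ for some $\alpha\geq\frac45$; equivalently, the orthogonal curve produced there forbids the cone from extending past the ray spanned by $\frac45 H^{[2]}-\frac12 B^{[2]}$. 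It therefore remains only to prove that $\frac45 H^{[2]}-\frac12 B^{[2]}$ is \emph{itself} nef. Once this is shown, that ray must be the second extremal edge of the nef cone (the first being $H^{[2]}$), and the theorem follows.

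For the nefness I would exploit the residuation isomorphism. Recall that $\psi\colon X^{[2]}\xrightarrow{\sim}S_X$ realizes $X^{[2]}$ as the smooth closed subvariety $S_X\subset X^{[3]}$ of collinear subschemes (here $X^{[3]}=X^{[d-2]}$ with $d=5$). I would pull back the nef class $H^{[3]}$ along the closed immersion $i\circ\psi\colon X^{[2]}\to X^{[3]}$. Since $H^{[3]}$ is nef on $X^{[3]}$, its restriction $i^*H^{[3]}$ is nef on $S_X$, and because $\psi$ is an isomorphism, $\psi^*i^*H^{[3]}$ is nef on $X^{[2]}$. By Lemma \ref{lemma:Pullbacks_01} specialized to $d=5$, this class equals $4H^{[2]}-\frac52 B^{[2]}=5\bigl(\frac45 H^{[2]}-\frac12 B^{[2]}\bigr)$. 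Hence $\frac45 H^{[2]}-\frac12 B^{[2]}$ is a positive rational multiple of a nef class, so it is nef, which completes the argument.

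The genuinely delicate point is conceptual rather than computational. The naive analogue of the method used for $X^{[3]}$ and $X^{[4]}$ — pulling back an ample Schubert class along a map $Z\mapsto\{\text{degree-}k\text{ forms vanishing on }Z\}$ into a Grassmannian $\Gr(\cdot,\cdot)$ — yields only the classes $kH^{[2]}-\frac12 B^{[2]}$ with integer $k\geq1$, whose $H^{[2]}$-coefficient never drops below $1>\frac45$; such maps can certify nefness only up to the ratio $2$, not the sharp ratio $\frac85$ demanded here. What makes $\frac45$ attainable is precisely residuation, which transports the nef generator $H^{[3]}$ of the higher Hilbert scheme back to $X^{[2]}$ and pushes the $H^{[2]}$-coefficient down to $\frac45$. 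I would also verify two minor sanity points: that normalizing the non-$H^{[2]}$ edge to have $B^{[2]}$-coefficient $-\frac12$ is legitimate (it is, since that edge has strictly negative $B^{[2]}$-component, as recorded in Theorem \ref{theorem:Nef-X(d-r)}), and that no circularity is introduced — only the trivial nefness of $H^{[3]}$ is invoked, never the full determination of the nef cone of $X^{[3]}$. Note finally that this nefness argument does not use the $3$-flex at all; the flex enters only through Theorem \ref{theorem:Nef-X(d-r)} to force extremality, which is consistent with the later result that $\frac45 H^{[2]}-\frac12 B^{[2]}$ becomes \emph{ample} when no surface $3$-flex is present.
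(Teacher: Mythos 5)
Your proposal is correct and follows essentially the same route as the paper: the extremality comes from Theorem \ref{theorem:Nef-X(d-r)} with $d=5$, $r=3$, and the nefness of $\frac{4}{5}H^{[2]}-\frac{1}{2}B^{[2]}$ comes from identifying it, via Lemma \ref{lemma:Pullbacks_01}, as $\frac{1}{5}\psi^*i^*H^{[3]}$, the pullback of a nef class under the closed immersion $i\circ\psi$. Your computation $\psi^*i^*H^{[3]}=4H^{[2]}-\frac{5}{2}B^{[2]}$ matches the paper's, and your closing remarks about where the flex hypothesis enters are accurate.
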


	\begin{proof}
	
		From Lemma \ref{lemma:Pullbacks_01}, we know that $$\psi^*i^*\!\left(\frac{1}{5}H^{[3]}\right)=\frac{4}{5}H^{[2]}-\frac{1}{2}B^{[2]}\,,$$ where, as earlier, $\psi:X^{[2]}\xrightarrow{\sim}S_X$ is the residuation isomorphism and $i:S_X\hookrightarrow X^{[3]}$ is the inclusion. It follows that $$\frac{4}{5}H^{[2]}-\frac{1}{2}B^{[2]}$$ is nef on $X^{[2]}$.
	
		By Theorem \ref{theorem:Nef-X(d-r)}, the nef cone of $X^{[2]}$ does not extend past the ray spanned by the above class, whence it must span an extremal ray of the nef cone of $X^{[2]}$.
	
	\end{proof}
	
	On the other hand, when $X$ does not have a surface $3$-flex, we obtain the following result.
	
	\begin{theorem}\label{theorem:Nef-X2-withoutFlex}
		Let $X$ be an $NL$-general quintic hypersurface in $\PP^3$ having no surface $3$-flexes. \begin{enumerate}
			\item The class $$\frac{4}{5}H^{[2]}-\frac{1}{2}B^{[2]}$$ is ample on $X^{[2]}$.\\
			
			\item The nef cone of the Hilbert scheme $X^{[2]}$ is spanned by $H^{[2]}$ and $$\alpha H^{[2]}-\frac{1}{2}B^{[2]}$$ for some real number $\alpha$ with $\dfrac{7}{10}\leq\alpha<\dfrac{4}{5}$.
		\end{enumerate}
	\end{theorem}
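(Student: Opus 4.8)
The plan is to handle both parts at once through the everywhere-defined morphism $g:=h\circ i\colon S_X\to X^{(3)}$, where $h\colon X^{[3]}\to X^{(3)}$ is the Hilbert--Chow morphism and $i\colon S_X\hookrightarrow X^{[3]}$ is the inclusion; concretely $g$ sends a collinear length-$3$ scheme to its support cycle. Writing $A$ for the ample class on $X^{(3)}$ with $h^*A=H^{[3]}$, we have $i^*H^{[3]}=g^*A$. Specializing Lemma \ref{lemma:Pullbacks_01} to $d=5$ gives $\psi^*i^*H^{[3]}=4H^{[2]}-\tfrac52 B^{[2]}=5\bigl(\tfrac45H^{[2]}-\tfrac12B^{[2]}\bigr)$, so that
$$\tfrac45H^{[2]}-\tfrac12B^{[2]}=\tfrac15\,\psi^*g^*A.$$
Since $\psi$ is an isomorphism and $A$ is ample, this class is ample on $X^{[2]}$ if and only if $g^*A$ is ample on $S_X$, i.e.\ (as $g$ is proper) if and only if $g$ is finite. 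This reduces part (1) to the finiteness of $g$, which in turn is the engine for the upper bound in part (2).

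For part (1) I would prove that $g$ is finite whenever $X$ has no surface $3$-flex. As $g$ is proper it suffices to rule out positive-dimensional fibres, i.e.\ one-parameter families of collinear length-$3$ schemes with a fixed support cycle $\zeta\in X^{(3)}$. If $\zeta=p+q+s$ or $\zeta=2p+q$ with the points distinct, collinearity pins the scheme down uniquely (the ambient line must be the one spanned by $\zeta$, and any length-$2$ structure must lie along it), so these fibres are finite. The only remaining case is $\zeta=3p$: a collinear length-$3$ scheme supported at $p$ is the order-$3$ truncation along a line $\ell\ni p$ with $\mathrm{mult}_p(\ell\cap X)\ge 3$, and such lines form a positive-dimensional family exactly when every tangent line at $p$ meets $X$ to order $\ge 3$, that is, when $\mathbb T_pX\cap X$ has multiplicity $\ge 3$ at $p$ -- precisely a surface $3$-flex. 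Hence the no-flex hypothesis forces all fibres of $g$ to be finite, so $g$ is finite and $\tfrac45H^{[2]}-\tfrac12B^{[2]}$ is ample.

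Part (2) then splits. The inequality $\alpha<\tfrac45$ is immediate: the class $\tfrac45H^{[2]}-\tfrac12B^{[2]}$ is nef (it is $\tfrac15\psi^*i^*H^{[3]}$ with $H^{[3]}$ nef), and by part (1) it is in fact ample, hence interior to the nef cone, so the second extremal ray $\alpha H^{[2]}-\tfrac12B^{[2]}$ satisfies $\alpha<\tfrac45$. For the lower bound $\alpha\ge\tfrac7{10}$ I would exhibit an effective curve $\Gamma\subset X^{[2]}$ with $\bigl(\Gamma\cdot B^{[2]}\bigr)/\bigl(2\,\Gamma\cdot H^{[2]}\bigr)\ge\tfrac7{10}$. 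Since $B^{[2]}/2$ is an integral class, $\Gamma\cdot B^{[2]}$ is even, and the smallest numerically admissible target is $(\Gamma\cdot H^{[2]},\Gamma\cdot B^{[2]})=(10,14)$; via the identities of Lemma \ref{lemma:Pullbacks_01} this is equivalent to producing an effective family of collinear length-$3$ schemes in $S_X$ whose class has intersection numbers $(5,2)$ against $(H^{[3]},B^{[3]})$ after including into $X^{[3]}$. I would build such a $\Gamma$ from a one-parameter family of contact-$3$ (asymptotic) lines whose point of tangency traverses a general hyperplane section of $X$, using the no-flex hypothesis to guarantee generic contact order exactly $3$, and compute the two intersection numbers by Riemann--Hurwitz together with the classical degrees of the associated asymptotic and residual loci.

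The main obstacle is this last enumerative computation. Unlike the flex case of Theorem \ref{theorem:Nef-X(d-r)}, there is no fixed plane curve with a triple point from which to project, so the curve attaining the bound must be assembled by letting the base point move; the delicate point is counting its non-reduced members (where the two residual points collide, or the contact order jumps) precisely enough to see that the ratio reaches $\tfrac7{10}$. By contrast, part (1) and the bound $\alpha<\tfrac45$ need only the finiteness of $g$, which follows from the short fibre analysis above.
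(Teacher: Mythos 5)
Your part (1) and the strict inequality $\alpha<\tfrac45$ in part (2) are correct and follow the same route as the paper: the paper also proves ampleness of $\tfrac45H^{[2]}-\tfrac12B^{[2]}=\tfrac15\psi^*i^*H^{[3]}$ by showing that the composite $S_X\xrightarrow{i}X^{[3]}\to X^{(3)}$ is finite, the no-$3$-flex hypothesis entering exactly where you put it (a positive-dimensional fibre could only sit over a cycle $3p$, forcing every tangent line at $p$ to have contact order $\geq3$, i.e.\ a surface $3$-flex). Your fibre analysis is in fact more explicit than the paper's one-line version of this argument.

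The lower bound $\alpha\geq\tfrac7{10}$, however, is where your proposal has a genuine gap: you identify the target intersection numbers $(10,14)$ but do not actually produce a curve realizing them, and you acknowledge that the enumerative computation for your proposed family (contact-$3$ lines with moving point of tangency) is the unresolved obstacle. That family is also harder than necessary --- one would need the degree of the residual curve swept out by the two remaining intersection points of a moving asymptotic line, which is a nontrivial count. The paper instead fixes a \emph{general} point $p\in X$ and spins the tangent line $\ell\subset\mathbb{T}_pX$ through $p$: writing $\ell\cap X=2p+q_1+q_2+q_3$, the curve $\Omega_1\subset X^{[2]}$ consists of the pairs $\{q_j,q_k\}$ (equivalently, $\Omega_1=\psi^{-1}$ of the family of collinear schemes $\{p,\ell\}\cup\{q_i\}$ in $S_X$). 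The plane section $C=X\cap\mathbb{T}_pX$ is a quintic with a node at $p$, its normalization $\tilde C$ has genus $5$, and projection from $p$ gives a degree-$3$ map $\tilde C\to\PP^1$ with ramification divisor of degree $2\cdot5-2+6=14$ by Riemann--Hurwitz; this yields $\Omega_1\cdot B^{[2]}=14$, while a general plane meets $C$ in $5$ points, each lying in two pairs, so $\Omega_1\cdot H^{[2]}=10$. Hence $\Omega_1\cdot\bigl(\tfrac7{10}H^{[2]}-\tfrac12B^{[2]}\bigr)=0$ and the bound follows. (A small side remark: $(10,14)$ is not the smallest pair with $b/(2a)\geq\tfrac7{10}$ --- e.g.\ $(4,6)$ also qualifies and would give a stronger bound --- it is the smallest with \emph{equality}, which is what this particular curve achieves.)
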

	
	\begin{proof}
		
		(i) As usual, let $i:S_X\hookrightarrow X^{[3]}$ be the inclusion, and let $\psi:X^{[2]}\xrightarrow{\sim}S_X$ be the residuation isomorphism.
		
		Since $X$ does not contain any surface $3$-flexes, it follows that $i^*H^{[3]}$ is positive on any curve contained in $S_X$. This can be seen as follows. Let $\Omega$ be any curve on $S_X$. The points of the schemes in $\Omega$ actually sweep out a curve in $X\subset\PP^3$, which must meet a general hyperplane section of $X$. Thus, $\Omega\cdot\left(i^*H^{[3]}\right)>0$.
		
		The composite $S_X\xrightarrow{i} X^{[3]}\xrightarrow{\mbox{\tiny Hilbert-Chow}} X^{(3)}$ is induced by the base-point free divisor $i^*H^{[3]}$, and the above argument shows that it is a finite morphism. It follows that $i^*H^{[3]}$ is ample on $S_X$.
		
		By Lemma \ref{lemma:Pullbacks_01}, we have $$\psi^*i^*\!\left(\frac{1}{5}H^{[3]}\right)=\frac{4}{5}H^{[2]}-\frac{1}{2}B^{[2]}\,,$$ and the first assertion follows.\\
		
		(ii) Consider the curve $\Omega_1$ on $X^{[2]}$ obtained as follows\,: fix a general point $p$ on $X$ and vary a tangent line to $X$ at $p$. For each such tangent line $\ell$, take, along with the fat point $\{p,\ell\}$, one of the ``three other points'' of intersection of $\ell$ with $X$, in turn. All these schemes together give us a curve on $S_X$\,. The curve $\Omega_1$ on $X^{[2]}$ is obtained from this via the residuation isomorphism $\psi$.
		
		Arguing (using Riemann-Hurwitz) as in the proof of Lemma \ref{lemma:i_of_psi_of_Gamma2-IntersectionNumbers}, we get $\Omega_1\cdot B^{[2]}=14$. Also, it is not hard to see that $\Omega_1\cdot H^{[2]}=10$. Thus, we compute $$\Omega_1\!\cdot\!\left(\frac{7}{10}H^{[2]}-\frac{1}{2}B^{[2]}\right)=0,$$ and we obtain the asserted lower bound on the nef cone of $X^{[2]}$.
	\end{proof}
	
	We conclude this section with a remark illustrating a reason as to why Theorem \ref{theorem:Nef-X2-withoutFlex} is interesting.
		
	\begin{remark}\label{remark:Nef-X2}
		If $X$ is an $NL$-general quintic having no surface $3$-flexes, then Theorem \ref{theorem:Nef-X2-withoutFlex} shows that $$\frac{4}{5}H^{[2]}-\frac{1}{2}B^{[2]}$$ is ample and the nef cone extends further (unlike the situation in Theorem \ref{theorem:Nef-X2-withFlex}). This is particularly remarkable, as it shows that $NL$-generality alone is not enough to give a uniform answer for the nef cone of $X^{[2]}$ for quintics $X$, which is in contrast to the cases covered in \cite[Proposition 4.5]{Bolognese et al.}, \cite[Lemma 13.3]{Bayer-Macri-K3} and Section \ref{sec:Nef-X3-X4} above.
	\end{remark}

\section{The nef cone of $X^{[n]}$ for a hypersurface containing a line}\label{sec:Nef-NLspecial}

Now that we have talked about $NL$-general hypersurfaces, we focus on the other end of the spectrum in this section. We explore the computation of the nef cone of $X^{[n]}$ when $X$ is a smooth degree $d\geq3$ hypersurface in $\PP^3$ containing a line (so $X$ is far from being $NL$-general).

\begin{theorem}\label{theorem:Nef-X2-for-NLspecial}
	Let $X$ be a degree $d\geq3$ smooth hypersurface in $\PP^3$ containing a line, and let $n\geq2$. Then the classes $H^{[n]}$ and $$(n-1)H^{[n]}-\frac{1}{2}B^{[n]}$$ span the nef cone of $X^{[n]}$ in the slice of the N\'{e}ron-Severi space spanned by $H^{[n]}$ and $B^{[n]}$.
\end{theorem}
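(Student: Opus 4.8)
The class $H^{[n]}$ is nef (it is pulled back from $X^{(n)}$), and the spinning curve $\Gamma_{[n]}$ of Section~\ref{subsec:SomeCurves} satisfies $\Gamma_{[n]}\cdot H^{[n]}=0$ and $\Gamma_{[n]}\cdot B^{[n]}=-2$, so $\Gamma_{[n]}\cdot\big(\alpha H^{[n]}-\frac{\beta}{2}B^{[n]}\big)=\beta$, which forces $\beta\geq0$ and shows that $H^{[n]}$ spans one edge of the nef cone inside the slice $\langle H^{[n]},B^{[n]}\rangle$. The entire content is therefore to locate the other edge, and I would split this into two independent tasks: (i) exhibit a curve $R$, supported on the line, that is orthogonal to $(n-1)H^{[n]}-\frac12 B^{[n]}$ (bounding the nef cone from one side), and (ii) show that $(n-1)H^{[n]}-\frac12 B^{[n]}$ is genuinely nef (bounding it from the other).

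For (i), let $\ell\cong\PP^1$ be the line on $X$ and set $W:=\{Z\in X^{[n]}:\Supp(Z)\subseteq\ell\}$. Since $\ell$ is smooth rational, $W\cong\ell^{[n]}=(\PP^1)^{[n]}\cong\PP^n$. I would take $R$ to be a line in this $\PP^n$, realised by fixing $n-1$ distinct points of $\ell$ and letting an $n$-th point traverse $\ell$. Then $R\cdot H^{[n]}=1$, because a general hyperplane section of $X$ meets $\ell$ in one point, while $R\cdot B^{[n]}=2(n-1)$: the moving point collides once with each of the $n-1$ fixed points, each collision contributing $2$ (equivalently, $R$ is a line tangent to the degree-$2(n-1)$ discriminant of $\PP^n$ at each collision). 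Hence $R\cdot\big((n-1)H^{[n]}-\frac12 B^{[n]}\big)=0$ with $R\cdot H^{[n]}>0$, so no class $\alpha H^{[n]}-\frac12 B^{[n]}$ with $\alpha<n-1$ can be nef.

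For (ii), I would mimic Lemmas~\ref{lemma:PullbackOfLfromP3*} and~\ref{lemma:PullbackOfHfromGr} using the rational map
$$\varphi:X^{[n]}\dashrightarrow \Gr\!\left(h^0(\OO_{\PP^3}(n-1))-n,\ H^0(\OO_{\PP^3}(n-1))\right)$$
sending $Z$ to the space of degree-$(n-1)$ forms vanishing on $Z$. The crucial observation is that $\varphi$ is a morphism on all of $X^{[n]}$: by the independent-conditions criterion invoked in the final remark of Section~\ref{sec:Nef-X3-X4} (see \cite[Proposition 3.1.(1)]{ABCH}), a length-$n$ scheme could fail to impose independent conditions on degree-$(n-1)$ forms only if it contained a collinear subscheme of length at least $(n-1)+2=n+1$, which is impossible for a scheme of length $n$. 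Consequently $\varphi^*\sigma_1=(n-1)H^{[n]}-\frac12 B^{[n]}$, where $\sigma_1$ is the ample Plücker class; the coefficients $n-1$ and $-\frac12$ are pinned down by intersecting against the test curves $\Gamma_{[n]},\Phi_{[n]},\Psi_{[n]}$ exactly as in the quintic and sextic computations. Thus $(n-1)H^{[n]}-\frac12 B^{[n]}$ is nef. It is worth noting that this step is insensitive to the presence of the line; the line enters only through the extremal curve $R$, which is precisely why the answer here differs from the $NL$-general case.

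Putting the halves together: by $\Gamma_{[n]}$ and $R$, every nef class in the slice lies in the cone spanned by $H^{[n]}$ and $(n-1)H^{[n]}-\frac12 B^{[n]}$; conversely both generators are nef, so by convexity the whole cone is nef, giving the asserted description. I expect the main obstacle to be the nefness in (ii): one must verify that $\varphi$ is honestly a morphism (rather than merely defined off $W$) and that the Plücker pullback has $H^{[n]}$-coefficient exactly $n-1$, both of which rest on the sharpness of the independent-conditions bound for length-$n$ schemes. A smaller but genuine subtlety is the factor of $2$ in $R\cdot B^{[n]}=2(n-1)$, arising from the tangency of $R$ to the discriminant hypersurface in $\PP^n$.
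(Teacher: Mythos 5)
Your proposal is correct and follows essentially the same route as the paper: the nef class is obtained by pulling back the Schubert divisor $\sigma_1$ under the everywhere-defined map to the Grassmannian of degree-$(n-1)$ forms (the paper's $\nu$), and the dual curve is the one sweeping a point along the line $\ell$. The only cosmetic differences are that the paper identifies $(n-1)H^{[n]}-\frac{1}{2}B^{[n]}$ as $c_1$ of a tautological rank-$n$ bundle via Grothendieck--Riemann--Roch rather than via your test-curve computation, and shows your curve $R$ is orthogonal to this class by noting it is contracted by $\nu$ rather than by computing $R\cdot B^{[n]}=2(n-1)$ directly.
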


\begin{proof}		
		Let $Z'$ be a closed subscheme of $\PP^3$ of length $\displaystyle\binom{n+2}{3}-n$ consisting of $\displaystyle\binom{n+2}{3}-n$ general points of $\PP^3\backslash X$ so that $$h^0\!\left(\mathcal{O}_{\PP^3}(n-1)\otimes\mathcal{I}_{Z'\subset\PP^3}\right)=n.$$ Let $E$ be the rank $n$ trivial vector bundle on $X^{[n]}$ with fiber $$H^0\!\left(\mathcal{O}_{\PP^3}(n-1)\otimes\mathcal{I}_{Z'\subset\PP^3}\right),$$ and let $F$ be the rank $n$ vector bundle $$\alpha_*\beta^*\!\left(\mathcal{O}_{\PP^3}(n-1)\otimes\mathcal{I}_{Z'\subset\PP^3}\right)$$ on $X^{[n]}$, where $\alpha$ is the composite of the natural maps $\Xi_n\hookrightarrow X\times X^{[n]}\rightarrow X^{[n]}$ and $\beta$ is the composite of the natural maps $\Xi_n\hookrightarrow X\times X^{[n]}\hookrightarrow\PP^3\times X^{[n]}\rightarrow\PP^3$, in which $\Xi_n$ denotes the universal family over $X^{[n]}$.
		
		We have a map $\varphi:E\rightarrow F$ of vector bundles whose action on the fiber over any $Z\in X^{[n]}$ can be described as taking any global section of $\mathcal{O}_{\PP^3}(n-1)\otimes\mathcal{I}_{Z'\subset\PP^3}$ to its restriction to $Z$. The locus over which $\varphi$ fails to be an isomorphism is the divisor $$D:=\left\{\,Z\in X^{[n]}\,:\,h^0\!\left(\mathcal{O}_{\PP^3}(n-1)\otimes\mathcal{I}_{Z'\subset\PP^3}\otimes\mathcal{I}_{Z\subset\PP^3}\right)\neq0\,\right\}$$ on $X^{[n]}$. Note that its class is the first Chern class $c_1(F)$ of $F$. One can use the Grothendieck-Riemann-Roch Theorem as in \cite[Lemma 4.7]{Huizenga12} to find that $$c_1(F)=(n-1)H^{[n]}-\frac{1}{2}B^{[n]}.$$
				
		Now consider the map $$\nu:X^{[n]}\rightarrow G:=\Gr\!\left(\binom{n+2}{3}-n,\binom{n+2}{3}\right)$$ that takes any $Z\in X^{[n]}$ to the locus of degree $n-1$ forms vanishing on $Z$. Let $\sigma_1$ be the Schubert class of all planes in $G$ that meet the subspace of degree $n-1$ forms vanishing on $Z'$. Then $\nu^*\sigma_1$ is the class of $D$, which shows that $$(n-1)H^{[n]}-\frac{1}{2}B^{[n]}$$ is nef on $X^{[n]}$.
		
		Let $\ell$ be a line in $\PP^3$ contained in $X$. The curve in $X^{[n]}$ obtained by fixing $n-1$ points of $\ell$ and letting another point move along $\ell$ is contracted by $\nu$, and it follows that this curve is orthogonal to  $$(n-1)H^{[n]}-\frac{1}{2}B^{[n]}.$$ The assertion follows.

\end{proof}

\bibliographystyle{plain}
  
\end{document}